\newcommand{\msc}[2][2000]{%
  \let\@oldtitle\@title%
  \gdef\@title{\@oldtitle\footnotetext{#1 \emph{Mathematics subject
        classification.} #2}}% 
}
\theoremstyle{plain}
\newtheorem{theorem}{Theorem} [section]
\newtheorem{definition}[theorem]{Definition}
\newtheorem{lemma}[theorem]{Lemma}
\newtheorem{proposition}[theorem]{Proposition}
\theoremstyle{remark}
\newtheorem{remark}[theorem]{Remark}
\def\R{{\mathbb R}}% real numbers
\def\H{{\mathcal H}}
\def\Sch{{\mathcal S}}% Schwartz space
\def\O{\mathcal O}
\def\F{\mathcal F}
\def\({\left(}
\def\){\right)}
\def\<{\left\langle}
\def\>{\right\rangle}
\def\le{\leqslant}
\def\ge{\geqslant}
\def\Eq#1#2{\mathop{\sim}\limits_{#1\rightarrow#2}}
\def\Tend#1#2{\mathop{\longrightarrow}\limits_{#1\rightarrow#2}}
\def\d{{\partial}}
\def\eps{\varepsilon}
\def\fum{{\widehat{u}_-}}
\def\um{{u_-}}
\def\u{{\mathbf u}}
\def\W{{W_-^{\rm mod}}}
\def\Scatt{{S^{\rm mod}}}
\def\cR{{\mathcal R}}
\DeclareMathOperator{\RE}{Re}
\numberwithin{equation}{section}
\begin{document}
\title[Long range scattering for cubic NLS]{Dynamics near the origin
  of the long range scattering for the 
  one-dimensional Schr\"odinger equation}    
\author[R. Carles]{R\'emi Carles}
\address{Univ Rennes, CNRS\\ IRMAR - UMR 6625\\ F-35000
  Rennes, France}
\email{Remi.Carles@math.cnrs.fr}

\begin{abstract}
  We consider the cubic Schr\"odinger equation on the line, for which
  the scattering theory requires modifications due to long range
  effects. We revisit the construction of the modified wave operator,
  and recall the construction of its inverse, in order to describe the
  asymptotic behavior of these operators near the origin. At leading
  order, these operators, whose definition includes a nonlinear
  modification in the phase compared to the linear dynamics, correspond to the identity. We compute
  explicitly the first corrector in the asymptotic expansion, and
  justify this expansion by error estimates. 
\end{abstract}

\thanks{This work was supported by Centre Henri Lebesgue,
program ANR-11-LABX-0020-0. A CC-BY public copyright license has
  been applied by the author to the present document and will be
  applied to all subsequent versions up to the Author Accepted
  Manuscript arising from this submission.}
\maketitle

\section{Introduction}
\label{sec:intro}
We consider the cubic Schr\"odinger equation on the line,
\begin{equation}
  \label{eq:NLS}
  i\d_t u +\frac{1}{2}\d_{x}^2u =\lambda |u|^2u,\quad x\in \R, 
\end{equation}
with $\lambda \in \R$. It is well known that in terms of scattering,
this equation corresponds to a borderline case, where long range
effects appear (see \cite{Barab}). The goal of this paper is to
analyze the modified scattering map near the origin, as well as the
modified wave operator and its inverse. We first recall a few aspects
of scattering theory for the nonlinear Schr\"odinger equation in the short
range case, then turn to the specificities of this long range setting.  

\subsection{Short range scattering}

Denote by $U(t)$ the  Schr\"odinger group,
\begin{equation*}
  U(t):= e^{i\frac{t}{2}\d_x^2}.
\end{equation*}
In view of the explicit formula for $U(t)$ as a convolution operator,
we have
\begin{equation}\label{eq:asym-lin}
  U(t)f(x)\Eq t {\pm\infty} e^{i\frac{x^2}{2t}} \frac{1}{(it)^{1/2}}\widehat
  f\(\frac{x}{t}\),
\end{equation}
where we normalize the Fourier transform as follows,
\begin{equation*}
  {\F} f(\xi)=\widehat
  f(\xi)=\frac{1}{\sqrt{2\pi}}\int_{\R}f(x)e^{-ix \xi} dx.
\end{equation*}
See Lemma~\ref{lem:R} for a more precise statement regarding
\eqref{eq:asym-lin}. 
Consider, for the
sake of comparison with \eqref{eq:NLS}, the case of a quintic,
defocusing nonlinearity
\begin{equation}
  \label{eq:NLSquintic}
  i\d_t u +\frac{1}{2}\d_{x}^2u = |u|^4 u,\quad x\in \R.
\end{equation}
The discussion for, e.g.,  the two-dimensional or the three-dimensional
cubic Schr\"odinger equation would be similar. Given 
\begin{equation}\label{eq:Sigma}
  u_-\in \Sigma:=\left\{f\in H^1(\R);\
    \|f\|_\Sigma:= \|f\|_{L^2}+\|\d_x f\|_{L^2} + \|x
    f\|_{L^2}<\infty\right\},
\end{equation}
there exists a unique $u_0\in \Sigma=H^1\cap \F(H^1)$ such that the
(unique, global) 
solution $u$ to \eqref{eq:NLSquintic} with $u_{\mid t=0}=u_0$
satisfies 
\begin{equation*}
  \|U(-t)u(t)-u_-\|_{\Sigma}\Tend t {-\infty}0.
\end{equation*}
We recall that $U(t)$ is unitary on $H^1$, but not on $\F(H^1)$, this
is why the quantity measured above is not $u(t)-U(t)u_-$. 
The map $u_-\mapsto u_0$ is classically referred to as \emph{wave
  operator} (see e.g. \cite{Ginibre}).

Conversely, given $u_0\in \Sigma$, there exists $u_+\in \Sigma$ such
that 
\begin{equation*}
  \|U(-t)u(t)-u_+\|_{\Sigma}\Tend t {+\infty}0.
\end{equation*}
The map $u_-\mapsto u_+$ is called the \emph{scattering operator}. It
can be defined for other (defocusing) nonlinearities, but strictly
supercubic, in view of \cite{Barab}, where it is proved that,
typically for \eqref{eq:NLS}, it is not possible to compare the
nonlinear dynamics with the linear one for large time (see also \cite{Ginibre}). We will see in the next
subsection that in the cubic case \eqref{eq:NLS}, nontrivial long range
effects must be taken into account, and that these effects are explicit.
\smallbreak

In general, rather little is known regarding properties of the
scattering map $S$.  For instance, it is proven in \cite{CaGa09} that
for smooth, power-like nonlinearities such that short range scattering
is known, the wave and scattering operators are analytic. The formula
for the associated Taylor series is given, and the formula differs
whether the expansion is considered at the origin or at a nontrivial
state. Such asymptotic expansions (not necessarily in the analytic
case) have proven useful in the context of inverse problems, see
e.g. \cite{MR4576319,KiMuVi-p} and references therein.

\subsection{Long range case: modified scattering}
\label{sec:mod}

The picture is different in the case of \eqref{eq:NLS}: the nonlinear
dynamics cannot be compared to the linear one, unless one considers
the trivial case $u\equiv 0$ (\cite{Barab}), so a modified scattering
theory has been developed in order to describe the large time behavior
of $u$, sharing some similarities with the linear case, presented in
e.g. \cite{DG}. 
\smallbreak

Given asymptotic states $u_-$ and $u_+$, we introduce the long range phase
corrections (different whether $t\to -\infty$ of $t\to +\infty$),
\begin{equation}
  \label{eq:defS}
  S_\pm(t,x):=\mp\lambda \left| \widehat{u}_\pm\left( \frac{x}{t}
  \right)\right|^2 \log |t|.
\end{equation}
Loosely speaking, the existence of modified wave operator reads as
follows: given $u_-$ (sufficiently small),  there exists a 
solution $u$ to \eqref{eq:NLS} such that  
\begin{equation*}
  u(t,x)\Eq t {-\infty} e^{iS_-(t,x)}U(t)u_-(x)\Eq t {-\infty}
  e^{iS_-(t,x)+i\frac{x^2}{2t}} \frac{1}{(it)^{1/2}}\widehat 
  u_-\(\frac{x}{t}\),
\end{equation*}
where the second approximation stems from \eqref{eq:asym-lin}. Note
that the phase modification $S_-$ is by no means negligible 
as $t\to -\infty$: it accounts for long range effects, as established
initially in \cite{Ozawa91}. We denote by $u_{\mid t=0} =\W (u_-)$ the
modified wave operator. 
\smallbreak

The modified asymptotic completeness is similar: given $u_0$
(sufficiently small), there exists $u_+$ such that the solution $u$ to
\eqref{eq:NLS} with $u_{\mid t=0}=u_0$ satisfies
\begin{equation*}
  u(t)\Eq t {+\infty} e^{iS_+(t)}U(t)u_+.
\end{equation*}
Such a result was proven initially in \cite{HN98}. 
The modified scattering map is given by $\Scatt (u_-)=u_+$. At this stage,
we have not addressed the function spaces in which the above
asymptotics have been proven. 
\smallbreak

In \cite{Ozawa91}, the existence of modified wave operators was
established for $u_-\in \F(H^2)$, with $\|\widehat u_-\|_{L^\infty}$
sufficiently small, and the solution $u$ to \eqref{eq:NLS} has an
$L^2$ regularity, $u_0\in L^2(\R)$. We emphasize, as the notation will
be used many times, that the space $\F(H^s)$ is characterized by
\begin{equation*}
  \F(H^s)=\{f\in \Sch'(R),\ \|f\|_{\F(H^s)}^2:=
  \int_{\R}\<x\>^{2s}|f(x)|^2dx<\infty\}, \quad \<x\>=\sqrt{1+x^2}.
\end{equation*}
The result of \cite{Ozawa91} also addresses the case where $u_-\in \H$
(defined below, see \eqref{eq:H}) where, provided again that
$\|\widehat u_-\|_{L^\infty}$ 
sufficiently small, the solution $u$ to \eqref{eq:NLS} has an
$H^1$ regularity, and  convergence holds in this space.

In \cite{HN98}, the asymptotic completeness was proven for $u_0\in
H^\gamma\cap \F(H^\gamma)$ with $\gamma>1/2$ and
$\|u_0\|_{H^\gamma\cap \F(H^\gamma)}$ sufficiently small. The obtained
asymptotic state $u_+$ is such that $\widehat u_+\in L^2\cap L^\infty$. 

Denote
\begin{equation}\label{eq:H}
  \begin{aligned}
{\H}&:=\{ f\in H^1(\R);\ \<x\>\d_x f, \<x\>^3f \in L^2(\R)\}\\
&= \{f\in {\mathcal S}'(\R); \|f\|_{\H}:=
\|\<x\>\d_xf  \|_{L^2}+
\|\<x\>^3f  \|_{L^2}
<\infty \}.
\end{aligned}
\end{equation}
In \cite{CaCMP}, the main result of \cite{Ozawa91} was adapted for
$u_-\in \H$ with $\|\widehat u_-\|_{L^\infty}$
sufficiently small, and the regularity of  the solution $u$ to
\eqref{eq:NLS} was proven to be at least $\Sigma$,
hence $u_0\in \Sigma$, making it possible to connect this 
result with the asymptotic completeness from \cite{HN98}, thus
defining a map $u_-\mapsto u_+$ from (a subset of) $\H$ to
$L^2\cap L^\infty$. When invoking the modified scattering operator, we
refer to this notion.

This gap in regularity between $u_-$ and $u_+$ was considerably
diminished in \cite{HN06}, where, with the same notations as above,
the authors consider the setting (along with smallness
conditions)
\begin{equation*}
  u_-\in \F(H^\alpha),\quad u_0\in \F(H^\beta),\quad u_+\in \F(H^\delta),
\end{equation*}
with the constraints $1/2<\delta<\beta<\alpha<1$, allowing these three
indices to be arbitrarily close one from another. This is
achieved by adapting the notion of (modified) asymptotic completeness
in order to avoid loss of differentiability issues, in space
dimensions two and three, where the borderline nonlinearity in terms
of scattering is $|u|^{2/d}u$.
\smallbreak

We also emphasize that there are many references addressing the theory
of long range scattering for nonlinear Schr\"odinger equations
(e.g. \cite{MR3724144,MR2047418,MR2864547,LindbladMurphy2006,MaMi18,MaMi19,MTT03}), as
well as for 
Schr\"odinger-like equations, like Hartree 
equation (e.g. \cite{MR1855975,MR3192651,MR3359525,Wada2000}), derivative nonlinear Schr\"odinger equation
(e.g. \cite{MR1618664,MR3144794}), Maxwell-Schr\"odinger system (e.g. \cite{MR2474176,MR2342882}),
wave-Schr\"odinger system (e.g. \cite{MR2853555}), not to mention other
dispersive equations.
\smallbreak

We note that the very definition of the modified wave and scattering
operators encodes the fact that the nonlinearity is cubic, and
recovering the nonlinearity from the scattering map does not make
sense, contrary to the case of \cite{MR4576319,KiMuVi-p}. In the case
where $\lambda$ is allowed to depend on $x$ in a somehow perturbative
way, Chen and Murphy \cite{ChenMurphy} showed that the inverse of the
modified wave operator uniquely determines $\lambda$. One of the tools
of the proof there is to study the behavior of this operator near the
origin, thanks to a rather implicit expansion
(\cite[Proposition~4.1]{ChenMurphy}). We present a 
more explicit formula in the next subsection, when
$\lambda$ is constant.

To fix the ideas, we summarize the above discussion by introducing the
following definition, where regularity aspects are left out for
simplicity. 
\begin{definition}
  Given $u_-$, the modified wave operator acting on $u_-$ is given by
  $u_{\mid t=0} =\W (u_-)$, where $u$ solves \eqref{eq:NLS}, and
  satisfies
  \begin{equation*}
  u(t,x)= e^{iS_-(t,x)}U(t)u_-(x) +o(1) \text{ in }L^2(\R)\text{ as }t\to-\infty,
\end{equation*}
where $S_-$ is defined in \eqref{eq:defS}.

The modified scattering
operator is defined by $\Scatt (u_-)=u_+$ if  the above solution $u$
satisfies in addition 
 \begin{equation*}
  u(t,x)= e^{iS_+(t,x)}U(t)u_+(x) +o(1) \text{ in }L^2(\R)\text{ as }t\to+\infty,
\end{equation*}
where $S_+$ is defined in \eqref{eq:defS}.
\end{definition}

\subsection{Main result}
In the same spirit as what has been achieved for the wave and
scattering operators in the short range case, we consider the
asymptotic behavior of the modified wave and scattering operators,
with two restrictions compared to \cite{CaGa09}. First, we shall
confine ourselves to the asymptotic expansion near the origin. Second, we
compute only the first two terms of this asymptotic expansion. We will
see that this already requires some amount of work, but that the
same method should provide some, if not all,
higher order terms in the expansion at the origin. On the other hand,
the description of these operators near a nontrivial state certainly
requires a different approach. \smallbreak

We emphasize that the question addressed here is different from  the
(higher order) asymptotic expansion of the large time 
behavior of $u(t)$, as studied in \cite{MR1913680}.
Our main results are gathered in the
following statement:

\begin{theorem}\label{theo:main}
  Let $v_-\in \H$ and $v_0\in \Sigma$.
  \begin{itemize}
  \item For any $0<\eta<2$, we have, in $\Sigma$ and  as $\eps\to 0$,
    \begin{equation*}
      \W(\eps v_-) = \eps v_- + \eps^3 w_2 +\O\(\eps^{5-\eta}\),
    \end{equation*}
    where $w_2\in \Sigma$ is defined by
     \begin{align*}
   w_2 & = -i\lambda\int_{-\infty}^{-1}\(  U(-\tau)\(
      |U(\tau)v_-|^2U(\tau)v_-\) +\frac{1}{|\tau|} \F^{-1}\( 
      |\widehat v_-|^2\widehat v_-\)\) d\tau\\
&\quad  -i\lambda\int_{-1}^0 U(-\tau) \( 
 |U(\tau)v_-|^2U(\tau)v_-\)d\tau.
     \end{align*}
      \item For any $0<\eta<2$, we have, in $L^2$ and  as
      $\eps\to 0$,
      \begin{equation*}
        \(W_+^{\rm mod}\)^{-1} (\eps v_0) = \eps v_ 0 + \eps^3 \mu_2+
        \O\(\eps^{5-\eta}\), 
      \end{equation*}
      where $\mu_2=\mu_2(v_0)\in L^2$ is defined by
      \begin{align*}
         \mu_2& =  -i\lambda \int_0^1 \( U(-s)\(
                |U(s)v_0|^2U(s)v_0\)\)ds \\
        &\quad +\lambda\int_1^\infty 
      \( M(-\tau)\F^{-1}\(\left|\widehat{M(\tau)v_0}\right|^2
       \widehat{M(\tau)v_0} \)- \F^{-1}\(|\widehat v_0 |^2\widehat
          v_0\)\) \frac{d\tau}{\tau} ,
     \end{align*}
   and $M(t)$ stands for the multiplication by
     $e^{i\frac{x^2}{2t}}$. 
 \item For any $0<\eta<2$, we have, in $L^2$ and  as $\eps\to 0$,
     \begin{equation*}
       \Scatt(\eps v_-) = \eps v_- + \eps^3  \nu_2 + \O\(\eps^{5-\eta}\),
     \end{equation*}
     where $\nu_2\in L^2$ is defined by
 $  \nu_2 = w_2 +  \mu_2(v_-),$
     where $w_2$ is given by the first point, and $\mu_2$ by the
     second. 
   \end{itemize}
\end{theorem}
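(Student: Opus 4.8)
The plan is to prove the three expansions in turn, treating the first (the modified wave operator $\W$) in detail and deriving the other two as consequences. For $\W(\eps v_-)$, I would follow the standard construction of the modified wave operator: the solution $u^\eps$ with prescribed asymptotics $e^{iS_-(t)}U(t)(\eps v_-)$ as $t\to-\infty$ is obtained by solving an integral (Duhamel-type) equation for the corrected profile, say $v^\eps(t) := U(-t)\big(e^{-iS_-(t)}u^\eps(t)\big)$, and then $\W(\eps v_-) = v^\eps(0)$. Writing $v^\eps(0) = \eps v_- - i\lambda\int_{-\infty}^{0} (\text{nonlinear term at time }\tau)\,d\tau$, with the long-range phase subtracted off in the region $\tau<-1$ to make the integral converge, the leading nonlinear contribution is cubic in $u^\eps \approx U(\tau)(\eps v_-)$, which is $\O(\eps^3)$ and, replacing $u^\eps$ by its leading approximation, produces exactly $\eps^3 w_2$. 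The remainder is then $\eps^3$ times the difference between the true nonlinear flow and its leading term, which is formally $\O(\eps^5)$; the loss $\eta$ comes from the logarithmic factors in $S_-$ and from the slow ($1/|\tau|$) decay near $\tau=-\infty$, so one only gets $\O(\eps^{5-\eta})$ after estimating in $\Sigma$.

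Concretely, the key steps are: (i) set up the fixed-point / continuity argument in a suitable weighted space (using the vector field $J(t)=x+it\d_x = U(t)xU(-t)$ and the a priori bounds behind the $\H\to\Sigma$ result of \cite{CaCMP,Ozawa91}), and record that $\|u^\eps(t) - e^{iS_-(t)}U(t)(\eps v_-)\|$ in the relevant norm is $\O(\eps^3 |t|^{-\kappa})$ for some $\kappa>0$; (ii) insert this into the Duhamel formula for $v^\eps(0)$, expand $|u^\eps|^2u^\eps$ about $|U(\tau)(\eps v_-)|^2 U(\tau)(\eps v_-)$, and use the explicit asymptotics \eqref{eq:asym-lin} (via Lemma~\ref{lem:R}) to identify the phase-subtracted integrand with the one defining $w_2$ and to show $w_2\in\Sigma$; (iii) bound each error term — the quintic-and-higher pieces, the difference coming from the phase $e^{-iS_-}\ne 1$, and the tail estimates — by $\O(\eps^{5-\eta})$ in $\Sigma$, splitting the $\tau$-integral at $|\tau|=1$ (or at $|\tau|=1/\eps^{2}$) and optimizing. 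For the second point, $(W_+^{\rm mod})^{-1}(\eps v_0)$, I would run the same scheme forward in time starting from the Cauchy data $\eps v_0$ at $t=0$: the asymptotic state is $u_+ = \lim_{t\to+\infty} U(-t)\big(e^{-iS_+(t)}u(t)\big)$, the leading term is again cubic, and the appearance of $M(\pm\tau)$ (multiplication by $e^{ix^2/2\tau}$) in $\mu_2$ is just the explicit form of $U(\tau)$ acting to leading order via \eqref{eq:asym-lin}, after the change of variable in the definition of $S_+$. The third point is immediate: by definition $\Scatt = (W_+^{\rm mod})^{-1}\circ \W$, and since both maps are $\mathrm{Id} + \O(\eps^3)$ with a compatible $\O(\eps^{5-\eta})$ remainder, composing gives $\eps v_- + \eps^3\big(w_2 + \mu_2(v_-)\big) + \O(\eps^{5-\eta})$, i.e. $\nu_2 = w_2 + \mu_2(v_-)$; one only needs that $\mu_2$ depends on its argument in a way compatible with this substitution and that $w_2\in\Sigma\subset L^2$ so the composition is licit.

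The main obstacle I anticipate is step (iii): obtaining the $\O(\eps^{5-\eta})$ remainder \emph{in the $\Sigma$ norm} (for $\W$) rather than merely in $L^2$. Controlling $\|xf\|_{L^2}$ and $\|\d_x f\|_{L^2}$ of the error forces one to commute $x$ and $\d_x$ (equivalently $J(t)$) through the cubic nonlinearity and through the phase factor $e^{-iS_-(t,x)}$, whose $x$-derivative grows like $\log|t|$; together with the borderline $1/|\tau|$ decay of the tail this is exactly what prevents a clean $\eps^5$ bound and produces the arbitrarily small loss $\eta$. Making this quantitative — choosing the splitting point in $\tau$ as a power of $\eps$ and balancing the log losses against the gain in $\eps$ — is where the real work lies; everything else is bookkeeping on top of the known well-posedness and asymptotic-completeness estimates recalled in the introduction.
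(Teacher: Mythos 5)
Your outline for the first and third points follows the paper's route: renormalize the divergent Duhamel integral by the counterterm $\frac{1}{|\tau|}\F^{-1}(|\widehat v_-|^2\widehat v_-)$ arising from the Taylor expansion of the modified phase, match at a time $|t|=1/\eps^\gamma$ with $\gamma<2$ (which is where the loss $\eta$ actually comes from, via the error $\eps^{3+\gamma}|\log\eps|$ at the matching time), and obtain $\nu_2=w_2+\mu_2(v_-)$ by propagating the perturbed data $\eps v_-+\eps^3 w_2$ forward. One concrete warning on your setup for $\W$: the profile $U(-t)\bigl(e^{-iS_-(t)}u^\eps(t)\bigr)$, with the phase applied as a multiplication in $x$, is the paper's $u_1$-type approximation; its Duhamel equation contains the term $\cR(t)\widehat w=\O(\eps/|t|)$, which is only \emph{linear} in the data and at the matching time $|t|=1/\eps^\gamma$ is of size $\eps^{1+\gamma}\gg\eps^3$, contaminating the corrector. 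The paper avoids this by putting the phase on the Fourier side ($u_2=U(t)\F^{-1}\widehat w$), which is what makes the remainder in Proposition~\ref{prop:wave-op} cubic in the data uniformly in $t$.

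The genuine gap is in the second point. ``Running the same scheme forward in time'' does not work: $(W_+^{\rm mod})^{-1}$ is an initial-value problem whose asymptotic state --- and hence the phase $S_+$, which depends on $\widehat u_+$ itself --- is unknown a priori, so there is no fixed point at $t=+\infty$ to set up. One needs the Hayashi--Naumkin integrating factor $\widehat w=\widehat v\exp\bigl(i\lambda\int_1^t|\widehat v(\tau)|^2\frac{d\tau}{\tau}\bigr)$, the a priori decay $\|u^\eps(t)\|_{L^\infty}\lesssim\eps\<t\>^{-1/2}$, and the resulting time-integrability of $\d_t\widehat w$ to produce $W$ and the residual constant phase $\Phi$; the specific integrand in $\mu_2$, namely $M(-\tau)\F^{-1}\bigl(|\widehat{M(\tau)v_0}|^2\widehat{M(\tau)v_0}\bigr)-\F^{-1}\bigl(|\widehat v_0|^2\widehat v_0\bigr)$, is exactly $\F^{-1}(I_1+I_2)$ from that decomposition, not a direct stationary-phase reading of $U(\tau)$. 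Two points you must then prove and do not mention: first, the residual phase satisfies $\|\Phi\|_{L^\infty}=\O(\eps^4)$ --- were it only $\O(\eps^2)$ or $\O(\eps^3)$ it would enter the $\eps^3$ corrector, and establishing $\O(\eps^4)$ requires the quantitative convergence $\|\widehat w(t)-W\|_{L^2\cap L^\infty}\lesssim\eps^3 t^{-\alpha+C\eps^2}$; second, the integral defining $\mu_2$ converges only after paying regularity (the sources decay like $t^{-(1-\gamma)/2}$ in $\Sigma^\gamma=H^\gamma\cap\F(H^\gamma)$, not in $\Sigma$), and closing the Gronwall argument for $\widehat w-\eps\widehat v_0-\eps^3\widehat\nu_2$ in $L^2$ uniformly up to $t=+\infty$ requires splitting each source term so as to recover algebraic decay in time. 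These are the substantive steps of the proof, not bookkeeping.
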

\begin{remark}
  The functions $\mu_2$ and $\nu_2$ are more regular than merely
  $L^2$, as we will see in Section~\ref{sec:complete-higher} that they
  belong to $H^\gamma\cap \F(H^\gamma)$ for any $0<\gamma<1$.
\end{remark}
It may be surprising that the leading order behavior of the modified
wave and scattering operators at the origin is the identity: we recall
that from their definition, these operators are already nontrivial,
and account for long range effects. We also emphasize that the first
corrector has a more involved expression than in the short range case,
a case where we would simply have (typically for the two-dimensional
and three-dimensional
cubic Schr\"odinger equations, see e.g. \cite{CaGa09})
\begin{align*}
  w_2& =-i\lambda \int_{-\infty}^0 \( U(-s)\(
       |U(s)v_0|^2U(s)v_0\)\)ds ,\\
  \mu_2& = -i\lambda \int_0^\infty \( U(-s)\(
        |U(s)v_0|^2U(s)v_0\)\)ds . 
\end{align*}
Note that in the (one-dimensional) long range case, these integrals diverge. 
\smallbreak

As evoked above, in principle, the method of proof that we present
allows to compute (some) higher order terms in the asymptotic
expansions near the origin. 

\subsection{Outline}

In Section~\ref{sec:prelim}, we recall several properties which are
classical in the context of scattering theory for nonlinear
Schr\"odinger equations, and which are of constant use in this
paper. In Section~\ref{sec:wave-op}, we revisit the construction of
the modified wave operator $\W$, in such a way that the leading order
behavior of 
this operator at the origin, presented in Section~\ref{sec:wave-zero}, is rather
straightforward.  In Section~\ref{sec:wave-zero}, we also consider the
first corrector in the asymptotic expansion of $\W$ at the origin, an
aspect which requires some extra work. In Section~\ref{sec:complete},
we recall the modified asymptotic completeness result established in
\cite{HN98}, and infer the leading of behavior of $\Scatt$ at the
origin. The first corrector is derived in
Section~\ref{sec:complete-higher}, where the last two error estimates
announced in Theorem~\ref{theo:main} are proved. 
\subsection{Notations}
We recall the classical factorization of the Schr\"odinger group, 
\begin{equation*}
  U(t)= e^{i\frac{t}{2}\d_x^2}= M(t)D(t)\F M(t),
\end{equation*}
where the multiplication $M(t)$, the dilation $D(t)$ and
the Fourier transform $\F$ are defined by
\begin{equation}\label{eq:MDF}
  \begin{aligned}
    & M(t) = e^{i\frac{ x^2}{2t}}, \quad D(t) f(x) =
    \frac{1}{(it)^{1/2}}f\(\frac{x}{t}\),\\
    &{\F} f(\xi)=\widehat
  f(\xi)=\frac{1}{\sqrt{2\pi}}\int_{\R}f(x)e^{-ix \xi} dx.
  \end{aligned}
 \end{equation}
Note that  each of these three operators is unitary on $L^2(\R)$, and
that \eqref{eq:asym-lin} reads $U(t)\approx M(t)D(t)\F $, see
also Lemma~\ref{lem:R}.
We recall that the space $\Sigma$ is defined by
\begin{equation*}
  \Sigma=H^1\cap \F(H^1)=\left\{f\in H^1(\R);\
    \|f\|_\Sigma:= \|f\|_{L^2}+\|\d_x f\|_{L^2} + \|x
    f\|_{L^2}<\infty\right\}.
\end{equation*}
We note that $\Sigma$ is a Banach algebra, invariant under the Fourier
transform, and such that $\Sigma\hookrightarrow L^1\cap L^\infty$. In
addition, $\Sigma$ is invariant under the action of $U(t)$, for any
$t\in \R$.
\smallbreak

For functions $f^\eps,g^\eps\ge 0$ depending on time $t$ and $\eps$, the
notation
\begin{equation*}
  f^\eps\lesssim g^\eps
\end{equation*}
means that there exists $C$ independent of $t$ and $\eps$ such that
\begin{equation*}
  f^\eps\le C g^\eps. 
\end{equation*}

\section{Technical preliminaries}
\label{sec:prelim}

In this section, we gather classical estimates which can be found in
several references cited in the introduction.

\begin{lemma}\label{lem:J}
The operator
\[J(t)=x+it\d_x\]
satisfies the following properties:
  \begin{itemize}
  \item $J(t) =U(t)xU(-t)$, and therefore $J$ commutes with the
 linear part of \eqref{eq:NLS},
\begin{equation}\label{eq:Jcommute}
\left[ J(t),i\d_t +\frac{1}{2}\d_x^2\right]=0\, .
\end{equation}
\item It can be factorized as
 \begin{equation*}%\label{eq:Jfactor}
J(t)= i t \, e^{i\frac{x^2}{2t}}\d_x\Big( e^{-i\frac{x^2}{2t}}\,
\cdot\Big)\, .
\end{equation*}
As a consequence, $J$ yields weighted Gagliardo-Nirenberg
inequalities. For
$2\le r \le \infty$, there exists
$C(r)$ depending only on $r$ such that
\begin{equation}\label{eq:GNlibre}
\left\| f \right\|_{L^r}\le \frac{C(r)}{|t|^{\delta(r)}} \left\|
f \right\|_{L^2}^{1-\delta(r)} \left\|
J(t) f \right\|_{L^2}^{\delta(r)},\quad \delta(r):=
\frac{1}{2}-\frac{1}{r}\, .
\end{equation}
Also, if $F(z)=G(|z|^2)z$ is $C^1$, then $J(t)$
acts like a derivative on $F(w)$:
\begin{equation}\label{eq:Jder}
J(t)\(F(w)\) = \d_z F(w)J(t)w -\d_{\overline z} F(w)\overline{ J(t)w
}\, .
\end{equation}
\end{itemize}
\end{lemma}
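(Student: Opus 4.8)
The three assertions are essentially computational, and the plan is to treat them in turn. For the first point, I would establish the factorization $J(t)=U(t)xU(-t)$ through a one-parameter family argument: set $A(t):=U(t)xU(-t)$, observe $A(0)=x$, and differentiate, using that $\d_x$ commutes with $U(t)$ together with $[\tfrac12\d_x^2,x]=\d_x$, to obtain $\d_tA(t)=U(t)[\tfrac i2\d_x^2,x]U(-t)=i\d_x$; integrating gives $A(t)=x+it\d_x=J(t)$. The commutation relation \eqref{eq:Jcommute} then follows either from this identity, since $U(t)$ is annihilated by $i\d_t+\tfrac12\d_x^2$, or, more directly, by expanding $[x+it\d_x,\,i\d_t+\tfrac12\d_x^2]$ term by term: the only nonzero contributions are $[x,\tfrac12\d_x^2]=-\d_x$ and $[it\d_x,i\d_t]=\d_x$, which cancel.

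For the second point, the factorization $J(t)=it\,e^{ix^2/2t}\d_x\big(e^{-ix^2/2t}\,\cdot\big)$ is immediate from the Leibniz rule applied to the right-hand side, using $\d_xe^{-ix^2/2t}=-\tfrac{ix}{t}e^{-ix^2/2t}$. To deduce \eqref{eq:GNlibre}, I would apply the standard one-dimensional Gagliardo--Nirenberg inequality $\|g\|_{L^r}\le C(r)\|g\|_{L^2}^{1-\delta(r)}\|\d_xg\|_{L^2}^{\delta(r)}$ to the function $g:=e^{-ix^2/2t}f$. Since multiplication by the unimodular factor $e^{-ix^2/2t}$ preserves all $L^p$ norms, and since the factorization gives $\d_xg=\tfrac{1}{it}e^{-ix^2/2t}J(t)f$, hence $\|\d_xg\|_{L^2}=|t|^{-1}\|J(t)f\|_{L^2}$, the claimed estimate drops out.

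For the third point, I would exploit the gauge covariance of $F(z)=G(|z|^2)z$, namely $F(e^{i\theta}z)=e^{i\theta}F(z)$ for all $\theta\in\R$; differentiating this identity in $\theta$ at $\theta=0$ yields the pointwise relation $\d_zF(z)\,z-\d_{\bar z}F(z)\,\bar z=F(z)$. Writing $J(t)(F(w))=xF(w)+it\d_x(F(w))$, I would replace $xF(w)$ by $x\big(\d_zF(w)w-\d_{\bar z}F(w)\bar w\big)$ and expand $\d_x(F(w))=\d_zF(w)\d_xw+\d_{\bar z}F(w)\d_x\bar w$ via the chain rule for Wirtinger derivatives; regrouping then exhibits $xw+it\d_xw=J(t)w$ and $x\bar w-it\d_x\bar w=\overline{J(t)w}$, which is exactly \eqref{eq:Jder}.

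There is no serious obstacle here; the points that require mild care are the sign bookkeeping in the Wirtinger chain rule and in the conjugation (checking that $x\bar w-it\d_x\bar w$ is precisely $\overline{J(t)w}$, with the correct sign on $t$), and using the gauge-covariance identity rather than differentiating $G$ itself, so that only the $C^1$ regularity of $F$ is needed.
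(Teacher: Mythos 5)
Your proof is correct. Note that the paper does not actually prove Lemma~\ref{lem:J}: it is presented at the start of Section~\ref{sec:prelim} as one of the ``classical estimates which can be found in several references cited in the introduction'', so there is no argument in the text to compare yours against. Your computations are the standard ones and all check out: $[\tfrac{i}{2}\d_x^2,x]=i\d_x$ commutes with $U(t)$, so $A(t)=U(t)xU(-t)$ integrates to $x+it\d_x$, and the two surviving commutators $[x,\tfrac12\d_x^2]=-\d_x$ and $[it\d_x,i\d_t]=\d_x$ indeed cancel; the conjugation identity gives $\d_x\big(e^{-ix^2/2t}f\big)=(it)^{-1}e^{-ix^2/2t}J(t)f$, whence $\|\d_x g\|_{L^2}=|t|^{-1}\|J(t)f\|_{L^2}$ and the factor $|t|^{-\delta(r)}$ in \eqref{eq:GNlibre}; and the gauge-covariance identity $z\,\d_zF-\bar z\,\d_{\bar z}F=F$ together with the Wirtinger chain rule and $\overline{J(t)w}=x\bar w-it\d_x\bar w$ yields \eqref{eq:Jder} with the correct signs. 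Your closing remark is also the right one to make: deriving the identity from $F(e^{i\theta}z)=e^{i\theta}F(z)$ rather than from differentiating $G$ keeps the hypothesis at $F\in C^1$ (real differentiability in $(z,\bar z)$), which is exactly what the statement assumes.
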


\begin{lemma}\label{lem:R}
  Denote by
  \begin{equation*}
    \cR(t)= M(t)D(t)\F \(M(t)-1\)\F^{-1},
  \end{equation*}
  where the above terms are defined in \eqref{eq:MDF}. The following
  estimates hold for $|t|\ge 1$:
\begin{enumerate}
\item For all $s>1/2$ and $0\le \theta \le 1$,
  \begin{equation*}
    \|\cR(t)f\|_{L^\infty_x}\lesssim
    \frac{1}{|t|^{1/2+\theta}}\|f\|_{H^{s+2\theta}},\quad \forall
      f\in \Sch(\R).
    \end{equation*}
\item For all $0\le \theta \le 1$,
  \begin{equation*}
    \|\cR(t)f\|_{L^2_x}\lesssim
    \frac{1}{|t|^{\theta}}\|f\|_{H^{2\theta}},\quad \forall
      f\in \Sch(\R).
    \end{equation*}    
\item For all $0\le \theta \le 1$,
  \begin{equation*}
    \|J(t)\cR(t)f\|_{L^2_x}\lesssim
    \frac{1}{|t|^{\theta}}\|f\|_{H^{1+2\theta}},\quad \forall
      f\in \Sch(\R).
    \end{equation*}
\item We have
  \begin{equation*}
    \|\d_x \cR(t)f\|_{L^2_x}\lesssim
    \frac{1}{|t|}\|f\|_{H^1}+
    \frac{1}{|t|}\|x f\|_{H^{2}},\quad \forall
      f\in \Sch(\R).
    \end{equation*}    
\end{enumerate}
\end{lemma}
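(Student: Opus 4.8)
The plan is to exploit the factorization $U(t)=M(t)D(t)\F M(t)$ to reduce everything to estimates on the multiplication operator $M(t)-1=e^{ix^2/(2t)}-1$, conjugated by Fourier transforms and dilations. Writing $\cR(t)=M(t)D(t)\F\(M(t)-1\)\F^{-1}$ and noting that $M(t)$ is unitary on $L^2$ and $L^\infty$, while $D(t)$ is unitary on $L^2$ and satisfies $\|D(t)g\|_{L^\infty}=|t|^{-1/2}\|g\|_{L^\infty}$, and $\F$ maps $L^2$ to $L^2$ and $L^1$ to $L^\infty$, I can peel these operators off one at a time. The heart of the matter is then to bound $\|\(M(t)-1\)\F^{-1}f\|$ in the appropriate Lebesgue space, and here the key elementary inequality is
\begin{equation*}
  \left| e^{i\frac{x^2}{2t}}-1 \right| \lesssim \min\left( 1, \frac{x^2}{|t|}\right)\lesssim \frac{|x|^{2\theta}}{|t|^\theta},\qquad 0\le \theta\le 1,
\end{equation*}
valid for $|t|\ge 1$. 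Thus $\|\(M(t)-1\)g\|_{L^2}\lesssim |t|^{-\theta}\||x|^{2\theta}g\|_{L^2}$, and after the conjugation $g=\F^{-1}f$ this becomes $|t|^{-\theta}\|f\|_{\F(L^2_{2\theta})}\lesssim |t|^{-\theta}\|f\|_{H^{2\theta}}$ since $\||x|^{2\theta}\F^{-1}f\|_{L^2}=\|\d_\xi^{2\theta}f\|_{L^2}$ up to constants (interpreting fractional powers via interpolation between $\theta=0$ and $\theta=1$). That gives (2) directly once one observes $D(t),M(t)$ are $L^2$-unitary.

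For (1), I would instead route through $L^\infty$: $\|\cR(t)f\|_{L^\infty}=|t|^{-1/2}\|\F\(M(t)-1\)\F^{-1}f\|_{L^\infty}\le |t|^{-1/2}\|\(M(t)-1\)\F^{-1}f\|_{L^1}$, and then Cauchy--Schwarz with a weight: for $s>1/2$,
\begin{equation*}
  \|\(M(t)-1\)g\|_{L^1}\le \left\| \frac{M(t)-1}{\<x\>^s}\right\|_{L^2}\|\<x\>^s g\|_{L^2}\lesssim \frac{1}{|t|^\theta}\|\<x\>^{s+2\theta}g\|_{L^2},
\end{equation*}
where in the last step one splits the region $|x|^2\le |t|$ (use $|M(t)-1|\lesssim x^2/|t|$, costing $|x|^{2}$, balanced by $\<x\>^{2\theta}/|t|^\theta$ for the borderline exponent) against $|x|^2\ge|t|$ (use $|M(t)-1|\le 2$, and the $\<x\>^s$ with $s>1/2$ makes the tail integrable, with a spare $|t|^{-\theta}$ extracted from $1\le (|x|^2/|t|)^\theta$ there). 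Back-substituting $g=\F^{-1}f$ gives $\|f\|_{H^{s+2\theta}}$. For (3), since $J(t)=U(t)xU(-t)$ commutes with $U(t)$, one checks $J(t)\cR(t)=M(t)D(t)\F\, x\,(M(t)-1)\F^{-1}$ up to harmless terms (using the factorization $J(t)=M(t)(it\d_x)M(t)^{-1}$ and $D(t)(it\d_x)=x D(t)$), so an extra factor of $x$ appears inside, costing one more derivative on $f$ after the Fourier conjugation: hence $H^{1+2\theta}$ rather than $H^{2\theta}$, with the same $|t|^{-\theta}$ gain. Finally (4) is the $\theta$-less analogue for $\d_x\cR(t)$: commute $\d_x$ through, producing a term where $\d_x$ hits $M(t)-1$ giving $\d_x e^{ix^2/(2t)}=\frac{ix}{t}e^{ix^2/(2t)}$, a clean $|t|^{-1}$ gain times a weight $|x|$, and a term where it passes to $\F^{-1}f$; collecting the weights and using that $\F$ trades $x$-weights for derivatives yields $|t|^{-1}(\|f\|_{H^1}+\|xf\|_{H^2})$.

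The main obstacle, and the step requiring the most care, is the fractional-$\theta$ bookkeeping in (1)--(3): the inequality $|e^{ix^2/(2t)}-1|\lesssim |x|^{2\theta}|t|^{-\theta}$ is immediate for $\theta\in\{0,1\}$ but for intermediate $\theta$ one must either interpolate the operator bounds (Stein--Weiss / complex interpolation on the analytic family $M(t)-1$ is overkill; real interpolation between the $L^2\to L^2$ endpoints suffices) or verify the pointwise bound directly via $\min(1,r)\le r^\theta$ and then track that $\||x|^{2\theta}\F^{-1}f\|_{L^2}\lesssim\|f\|_{H^{2\theta}}$, which is itself an interpolation statement. One must also be slightly careful that these are stated for $f\in\Sch(\R)$ so all manipulations (integration by parts, Fubini, differentiation under the integral) are justified, and the constants are uniform in $|t|\ge1$ — which they are, since every use of $|t|\ge1$ is only to guarantee $|t|^{-1/2}\le1$ and to allow $1\le(|x|^2/|t|)^\theta$ on $|x|^2\ge|t|$. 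Everything else is bounded multiplication-operator and Fourier-multiplier estimates of a completely standard type.
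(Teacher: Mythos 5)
Your proof follows essentially the same route as the paper's: the elementary pointwise bound $|M(t)-1|\lesssim |x^2/t|^\theta$ for $0\le\theta\le1$, combined with Plancherel (for the $L^2$ bounds), Hausdorff--Young plus the Cauchy--Schwarz embedding $\F(H^s)\hookrightarrow L^1$ for $s>1/2$ (for the $L^\infty$ bound), the exact conjugation identity reducing $J(t)\cR(t)$ to multiplication by $x(M(t)-1)$ on the Fourier side, and a Leibniz split for $\d_x\cR(t)$. The one caveat is that your displayed Cauchy--Schwarz step in (1), which pairs $|M(t)-1|$ with $\<x\>^{-s}$ alone, does not by itself yield the factor $|t|^{-\theta}$ unless $2s-4\theta>1$; the weight $|x|^{2\theta}$ must be transferred onto $g$ before applying Cauchy--Schwarz (which is what your verbal description of the region splitting amounts to, and what the paper does).
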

\begin{remark}
  We emphasize that $\cR(t)$ does not map $L^\infty(\R)$ into itself,
  since $\F \(M(t)-1\)\F^{-1}= U(\frac{1}{t})-1$ and 
  $U(\frac{1}{t})(e^{-it\frac{x^2}{2}})=\delta_0$. 
\end{remark}
Even though such estimates can be found in the existing literature,
we give a proof, as the ideas will be resumed when $M(t)-1$ is present. 
\begin{proof}
  In view of the definition of $\cR$, we readily have
  \begin{equation*}
    \|\cR(t)f\|_{L^\infty_x} =\frac{1}{|t|^{1/2}}\|\F (M(t)-1)\F^{-1}f\|_{L^\infty}
  \end{equation*}
  Using Hausdorff-Young inequality and the general estimate
  \begin{equation}\label{eq:sinus}
    |M(t)-1| = 2\left|\sin\(\frac{x^2}{4t}\)\right|\lesssim
    \left|\frac{x^2}{t}\right|^\theta,\quad \forall \, 0\le \theta\le 1,
  \end{equation}
  we find
  \begin{equation*}
    \|\cR(t)f\|_{L^\infty_x} \lesssim \frac{1}{|t|^{1/2}}\left\|
    \left|\frac{x^2}{t}\right|^\theta  \F^{-1}f\right\|_{L^1}. 
    \end{equation*}
 Using the easy property $\F(H^s)\hookrightarrow L^1$ for
 $s>1/2$ (the same is true on $\R^d$ provided that $s>d/2$,
 from Cauchy-Schwarz inequality),
 \begin{equation*}
    \|\cR(t)f\|_{L^\infty_x} \lesssim \frac{1}{|t|^{1/2+\theta}}\left\|
    \<x\>^s|x|^{2\theta} \F^{-1}f\right\|_{L^2}\lesssim
    \frac{1}{|t|^{1/2+\theta}} \|f\|_{H^{s+2\theta}}, 
  \end{equation*}
  hence the first inequality. For the second one, since $D(t)$ is
  unitary on $L^2$,  Plancherel formula
  and \eqref{eq:sinus} yield
  \begin{equation*}
     \|\cR(t)f\|_{L^2_x} = \| (M(t)-1)\F^{-1}f\|_{L^2}\lesssim \frac{1}{|t|^{\theta}} \left\|
   |x|^{2\theta} \F^{-1}f\right\|_{L^2} \lesssim
 \frac{1}{|t|^{\theta}} \|f\|_{H^{2\theta}}.
  \end{equation*}
For the third inequality, we use the formula $J(t)=U(t)xU(-t)$, along
with the factorization
\begin{equation}\label{eq:U(-t)}
  U(-t) = iM(-t) \F^{-1}D\(\frac{1}{t}\) M(-t),
\end{equation}
to obtain
 \begin{equation*}
     \|J(t)\cR(t)f\|_{L^2_x} = \|x (M(t)-1)\F^{-1}f\|_{L^2}\lesssim
     \frac{1}{|t|^{\theta}} \left\| 
   |x|^{1+2\theta} \F^{-1}f\right\|_{L^2} \lesssim 
 \frac{1}{|t|^{\theta}} \|f\|_{H^{1+2\theta}}.
  \end{equation*}
  To estimate $\d_x \cR(t)f$, two terms appear, whether the derivative
  hits the first factor $M(t)$ or not, and we have
  \begin{equation*}
     \|\d_x\cR(t)f\|_{L^2_x} \le  \left\|\frac{x}{t}D(t)\F
       (M(t)-1)\F^{-1}f\right\|_{L^2}+ \frac{1}{|t|}\left\|\d_x\F
       (M(t)-1)\F^{-1}f\right\|_{L^2}.
   \end{equation*}
   The first term on the right hand side is equal to
   \begin{align*}
     \left\|x\F
       (M(t)-1)\F^{-1}f\right\|_{L^2}&=  \left\|
  \d_x (M(t)-1)\F^{-1}f\right\|_{L^2}\\
    &\le \left\| \frac{x}{t}\F^{-1}f\right\|_{L^2}
   +\left\|
  (M(t)-1) \d_x\F^{-1}f\right\|_{L^2}\\
   & \lesssim \frac{1}{|t|}
    \|f\|_{H^1} + \frac{1}{|t|^{\theta}}\|xf\|_{H^{2\theta}},
  \end{align*}
  for any $\theta\in [0,1]$. Note that 
 \[ \frac{1}{|t|}\left\|\d_x\F
     (M(t)-1)\F^{-1}f\right\|_{L^2}= \frac{1}{|t|}\left\|x
     (M(t)-1)\F^{-1}f\right\|_{L^2}\lesssim \frac{1}{|t|}\left\|x
   \F^{-1}f\right\|_{L^2},
\]
hence the result, by choosing $\theta=1$.
\end{proof}

\section{Modified wave operator}
\label{sec:wave-op}
A key aspect in the proofs of the existence of modified wave
operators for \eqref{eq:NLS}, in particular to get minimal regularity
assumptions on the asymptotic state $u_-$, is to consider a suitable
approximate solution near $t=-\infty$. Introduce two such approximate
solutions, $u_1$ and $u_2$, defined for $t\le -10$, given by
\begin{equation*}
  u_1(t,x) :=\frac{1}{(it)^{1/2}}e^{i\frac{x^2}{2t}}\fum
  \(\frac{x}{t}\) e^{i S_-(t,x)}=M(t)D(t)\widehat w(t,x),
\end{equation*}
where
\begin{equation*}
\widehat w(t):= \fum e^{i\lambda |\fum|^2\log|t|},
\end{equation*}
where we resume the same notations as in \cite{HN06},
and
\begin{equation*}
  u_2(t) :=U(t)\F^{-1} \widehat w.
\end{equation*}
As $t\to -\infty$, $u_1$ and $u_2$ are close, in view of
Lemma~\ref{lem:R}, since $u_2(t)-u_1(t)= \cR(t)\widehat w$.
Our goal is to keep track of the smallness of the asymptotic state as precisely as
possible, in the sense that we consider $u_-^\eps=\eps v_-$ for some
fixed $v_-$,  assume that $\eps>0$ is small, and we want to get larger
powers of $\eps$ in the error terms whenever we can. It turns out that
apparently, $u_2$ is a better candidate than $u_1$ in this
direction. More explicitly, the Duhamel formula for $u-u_1$ (see
\cite{HN06}) contains a linear term ($R\widehat w$), which we want to
remove. We first gather some estimates on $u_1$.
\begin{lemma}\label{lem:u1}
  Let $\um\in \H$: $\|u_1(t)\|_{L^2}=\|u_2(t)\|_{L^2}=\|\um\|_{L^2}$ for all $t\le
  -10$. 
  There exists $C>0$ independent of $\um$ such that for all $t\le -10$,
  \begin{align*}
    & \| u_1(t)\|_{L^\infty}\le
      \frac{\|\fum\|_{L^\infty}}{\sqrt{|t|}},\\
    &\|\d_x u_1(t)\|_{L^2}\le C\(\|\um\|_{\H}
    +\|\um\|_{\H}^3\) ,\\
    &\|J(t) u_1(t)\|_{L^2}\le C  \|\um\|_{\H}
  \(1+ \|\um\|_{\H}^2\log |t|\).  
  \end{align*}
\end{lemma}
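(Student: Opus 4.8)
The plan is to verify the three estimates in turn, using the factorizations collected in Lemma~\ref{lem:J} and the explicit structure of $u_1(t)= M(t)D(t)\widehat w(t)$, where $\widehat w(t)=\fum\,e^{i\lambda|\fum|^2\log|t|}$. The $L^2$ identity is immediate: since $M(t)$ and $D(t)$ are unitary on $L^2$ and the nonlinear phase factor $e^{i\lambda|\fum|^2\log|t|}$ has modulus one, $\|u_1(t)\|_{L^2}=\|\widehat w(t)\|_{L^2}=\|\fum\|_{L^2}=\|\um\|_{L^2}$; the same computation applied to $u_2(t)=U(t)\F^{-1}\widehat w$ gives $\|u_2(t)\|_{L^2}=\|\fum\|_{L^2}$, using unitarity of $U(t)$ and $\F$. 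The $L^\infty$ bound is equally direct: $\|u_1(t)\|_{L^\infty}=|t|^{-1/2}\|D(1)$ applied to the profile$\|_{L^\infty}=|t|^{-1/2}\|\fum\|_{L^\infty}$ because $D(t)$ only rescales the spatial variable and multiplies by $|t|^{-1/2}$, while $M(t)$ and the nonlinear phase are unimodular.

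For the derivative estimate, I would compute $\d_x u_1$ directly from $u_1=M(t)D(t)\widehat w(t)$. Differentiating, one picks up a term $\frac{ix}{t}M(t)D(t)\widehat w$ from $\d_x M(t)$ and a term $\frac{1}{t}M(t)D(t)(\d_\xi \widehat w)(\cdot/t)$ from the chain rule on $D(t)$. The first contributes $\|\frac{x}{t}M(t)D(t)\widehat w\|_{L^2}$, and after changing variables this is $\||t|^{-1}\cdot t\xi\cdot\widehat w(\xi)\|_{L^2}=\|\xi\widehat w\|_{L^2}=\|\d_x(\F^{-1}\widehat w)\|_{L^2}\le\|\d_x\um\|_{L^2}\le\|\um\|_{\H}$. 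The second term, after rescaling, is $\||t|^{-1}\cdot|t|\cdot\d_\xi\widehat w\|_{L^2}=\|\d_\xi\widehat w\|_{L^2}$; here $\d_\xi\widehat w=(\d_\xi\fum)e^{i\lambda|\fum|^2\log|t|}+\fum\cdot i\lambda\log|t|\,\d_\xi(|\fum|^2)\,e^{i\lambda|\fum|^2\log|t|}$, and the logarithmic factor is the danger. Observe, however, that for $\d_x u_1$ one uses $J(t)=U(t)xU(-t)$ differently: the slick route is via $J(t)u_1=it M(t)\d_x(M(-t)u_1)=itM(t)\d_x(D(t)\widehat w)$, so $\|J(t)u_1\|_{L^2}=|t|\,\|\d_x(D(t)\widehat w)\|_{L^2}=|t|\cdot|t|^{-1}\|\d_\xi\widehat w\|_{L^2}=\|\d_\xi\widehat w\|_{L^2}$, which is exactly the term that carries the $\log|t|$. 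Bounding $\|\d_\xi\widehat w\|_{L^2}\le\|\d_\xi\fum\|_{L^2}+2|\lambda|\log|t|\,\||\fum|\,\d_\xi\fum\|_{L^\infty\cdot L^2}$ — more carefully $\||\fum|^2$ times lower-order$\|$ — and controlling $\|\fum\|_{L^\infty}$, $\|\d_\xi\fum\|_{L^2}$, $\|\xi\d_\xi\fum\|_{L^2}$ and $\|\xi^3\fum\|_{L^2}$-type quantities by $\|\um\|_{\H}$ (using $\H\hookrightarrow\Sigma\hookrightarrow L^\infty$ and that $\F$ exchanges $x$ and $\d_x$), yields the claimed $\|\um\|_{\H}(1+\|\um\|_{\H}^2\log|t|)$. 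For $\|\d_x u_1\|_{L^2}$ one combines the two terms above: the first gives $\|\um\|_{\H}$; the second, $\|\d_\xi\widehat w\|_{L^2}$, also produces a $\log|t|$, so to get the clean bound $C(\|\um\|_{\H}+\|\um\|_{\H}^3)$ without a logarithm I would instead use $J(t)u_1=xu_1+it\d_x u_1$, so $\d_x u_1=\frac{1}{it}(J(t)u_1-xu_1)$, whence $\|\d_x u_1\|_{L^2}\le\frac{1}{|t|}(\|J(t)u_1\|_{L^2}+\|xu_1\|_{L^2})$; since $|t|\ge10$ and $\log|t|/|t|$ is bounded, the $\log$ disappears, and $\|xu_1\|_{L^2}=\|x M(t)D(t)\widehat w\|_{L^2}$ rescales to $|t|\,\|\xi\widehat w\|_{L^2}=|t|\,\|\d_x(\F^{-1}\widehat w)\|_{L^2}\le|t|\,\|\um\|_{\H}$, so $\frac{1}{|t|}\|xu_1\|_{L^2}\le\|\um\|_{\H}$, and the $J$-term contributes $\frac{1}{|t|}\|\um\|_{\H}(1+\|\um\|_{\H}^2\log|t|)\lesssim\|\um\|_{\H}+\|\um\|_{\H}^3$.

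The main obstacle is the careful bookkeeping of how the nonlinear phase $e^{i\lambda|\fum|^2\log|t|}$ interacts with differentiation: every $\d_\xi$ that lands on the phase produces a factor $\log|t|\,\d_\xi(|\fum|^2)$, and one must ensure (i) these factors are controlled by $\|\um\|_{\H}$ with the correct power (cubic, from the product $\fum\cdot\fum\cdot\d_\xi\fum$), and (ii) for the $\d_x u_1$ estimate the extra $|t|^{-1}$ from rewriting $\d_x$ in terms of $J(t)$ absorbs the logarithm. A secondary technical point is justifying $\|\fum\|_{L^\infty}$, $\|\d_\xi\fum\|_{L^2}$ and weighted norms like $\|\xi\d_\xi\fum\|_{L^2}$, $\|\d_\xi(\xi^2\fum)\|$-type quantities are all finite for $\um\in\H$; this follows from the definition \eqref{eq:H}, the embedding $\H\hookrightarrow\Sigma$, $\Sigma\hookrightarrow L^\infty$, and the fact that $\F$ turns the weight $\<x\>$ into $\<\d_x\>$ and vice versa, so that $\d_\xi\fum=-i\widehat{x\um}$ and $\xi\fum=-i\widehat{\d_x\um}$, etc. Once these identifications are in place, the estimates reduce to Leibniz expansions and Hölder/Gagliardo–Nirenberg inequalities, with no remaining subtlety.
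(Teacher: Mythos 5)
Your proposal is correct and follows essentially the same route as the paper: the $L^2$ and $L^\infty$ bounds from unitarity of $M(t),D(t)$ and the unimodular phase, and the $\dot H^1$ and $J$-bounds by differentiating $u_1=M(t)D(t)\widehat w$ and using $J(t)=it\,M(t)\d_x\(M(-t)\,\cdot\)$, so that $\|J(t)u_1\|_{L^2}=\|\d_\xi\widehat w\|_{L^2}$ carries the single $\log|t|$ from the phase. The only quibble is your intermediate claim that the dilation term of $\d_x u_1$ rescales to $\|\d_\xi\widehat w\|_{L^2}$: it is in fact $|t|^{-1}\|\d_\xi\widehat w\|_{L^2}$, so the factor $\log|t|/|t|$ is already bounded for $|t|\ge 10$ and the detour through $\d_x u_1=\frac{1}{it}\(J(t)u_1-xu_1\)$, while perfectly valid, is not actually needed.
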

\begin{proof}
  The conservation of the $L^2$-norm is obvious, as well as 
the estimate 
\begin{equation*}
  \|u_1(t)\|_{L^\infty}\le \frac{\|\fum\|_{L^\infty}}{\sqrt{|t|}}.
\end{equation*}
The expression of $u_1$ yields directly
\begin{equation*}
  \|\d_x  u_1(t)\|_{L^2} \le \|\d_x \fum\|_{L^2} +
  |\lambda|\frac{\log |t|}{|t|}\|\fum\|_{L^\infty}^2\|\d_x \fum\|_{L^2} .
\end{equation*}
In view of the factorization for $J$ stated in Lemma~\ref{lem:J},
\begin{align*}
  J(t)u_1(t)&= it M(t) \d_x \( D(t)\fum
                    e^{iS_-(t)}\)\\
&= e^{i\frac{x^2}{2t}+iS_-(t)}\(i D(t)(\d_x \fum)-\lambda 
   D(t)\fum \times \log|t| \times\d_x \left|\fum\right|^2\(\frac{x}{t}\)\),
\end{align*}
thus
\begin{equation*}
  \| J(t)u_1(t)\|_{L^2}\lesssim \|\fum\|_{H^1} + 
\log |t|\|\fum\|_{L^\infty}^2 \|\fum\|_{H^1} .
\end{equation*}
The lemma follows. 
\end{proof}

\begin{lemma}\label{lem:Rw}
  Let $\um\in \H$. 
  There exists $C>0$ independent of $\um$ such that for all $t\le -10$,
  \begin{align*}
    & \| \cR(t)\widehat w\|_{L^\infty}\le
      \frac{C}{\sqrt{|t|}} \(\|\fum\|_{H^1}+ \|\fum\|_{H^1}^3\),\\
    &  \|\cR(t)\widehat w\|_{L^\infty}\le \frac{C}{|t|^{3/2}}\(\|
   \um\|_{\H}+ (\log|t|)^{3}\|\um\|_{\H}^7\).\\
    &\|  U(-t) \cR(t)\widehat w\|_{\Sigma}\le \frac{C}{|t|}\( \|\um\|_{\H} +
  (\log|t|)^{3}\| \um\|_{\H}^7\).
  \end{align*}
\end{lemma}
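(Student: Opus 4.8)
The plan is to estimate each of the three quantities in Lemma~\ref{lem:Rw} by applying the estimates of Lemma~\ref{lem:R} to $f=\widehat w(t)$, and then to control the various Sobolev norms of $\widehat w(t)=\fum e^{i\lambda|\fum|^2\log|t|}$ in terms of $\|\um\|_{\H}$ (equivalently $\|\fum\|_{H^1}$, since $\F$ is unitary and $\H$ is defined by weighted $L^2$ bounds on $\fum$ and its derivative). The point to keep in mind throughout is that differentiating the phase $e^{i\lambda|\fum|^2\log|t|}$ brings down a factor $\lambda\log|t|\,\d_x|\fum|^2$, so each derivative that hits the phase costs a $\log|t|$ and raises the homogeneity in $\fum$ by $2$; this is the origin of the $(\log|t|)^3$ and degree-$7$ terms on the right-hand sides. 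Since we only ever need finitely many derivatives (up to order $3$ for the first bound via part (1) of Lemma~\ref{lem:R} with $s>1/2$, $\theta=1$; up to order $1$ plus one $x$-weight of order $2$ for the $\Sigma$-bound via parts (3)--(4)), all Sobolev norms of $\widehat w$ are bounded by sums of products of $H^1$-type norms of $\fum$, using that $H^1(\R)$ is an algebra embedded in $L^\infty$.

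First I would record that, because $\F(H^2)\hookrightarrow L^1$ and $H^1$ is a Banach algebra on $\R$ embedded in $L^\infty$, we have for $t\le-10$
\begin{equation*}
  \|\widehat w(t)\|_{H^k}\lesssim \|\fum\|_{H^k}\(1+(\log|t|)^k\)\(1+\|\fum\|_{H^1}\)^{2k}, \quad k=1,2,3,
\end{equation*}
by Leibniz on $\fum e^{i\lambda|\fum|^2\log|t|}$, each of the (at most $k$) derivatives falling either on $\fum$ or on the exponential, the latter producing at most a $(\log|t|)$ and a $|\fum|\,\d_x\fum$-type factor. With $k=3$ this is a degree-$7$ bound and, combined with the embedding $H^1\hookrightarrow H^1$ (to absorb the lower-order pieces into $\|\fum\|_{H^1}+\|\fum\|_{H^1}^3$ when one does not care about powers of $\log$), it already gives the first inequality from part~(1) of Lemma~\ref{lem:R} with $s$ slightly above $1/2$ and $\theta=1/2$ — actually one only needs $s+2\theta\le 1$ is impossible, so instead one takes $\theta$ small enough that $s+2\theta<1$ cannot hold; rather, for the first bound one uses part~(1) with $\theta=0$ and $s\in(1/2,1)$, giving decay $|t|^{-1/2}$ and the norm $\|\widehat w\|_{H^s}\lesssim\|\fum\|_{H^1}+\|\fum\|_{H^1}^3$ (here $k=1$ suffices since $s<1$). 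For the second, sharper bound one takes $\theta=1$ in part~(1), getting $|t|^{-3/2}$ and $\|\widehat w\|_{H^{s+2}}\lesssim\|\widehat w\|_{H^3}$, which by the display above is $\lesssim\|\um\|_{\H}+(\log|t|)^3\|\um\|_{\H}^7$.

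For the third estimate I would expand $U(-t)\cR(t)\widehat w$ in the $\Sigma$-norm as the sum of its $L^2$, $\d_x$, and $x$-weighted $L^2$ parts. Since $\cR(t)=M(t)D(t)\F(M(t)-1)\F^{-1}$ and $U(-t)=iM(-t)\F^{-1}D(1/t)M(-t)$ (formula \eqref{eq:U(-t)}), one has $U(-t)\cR(t)=\pm(M(t)-1)\F^{-1}$ up to unitary factors, so that $\|U(-t)\cR(t)\widehat w\|_{L^2}=\|\cR(t)\widehat w\|_{L^2}$, $\|x\,U(-t)\cR(t)\widehat w\|_{L^2}=\|J(t)\cR(t)\widehat w\|_{L^2}$, and $\|\d_x U(-t)\cR(t)\widehat w\|_{L^2}=\|\d_x\cR(t)\widehat w\|_{L^2}$ — this is exactly the way parts (2), (3), (4) of Lemma~\ref{lem:R} are set up. Applying part~(2) with $\theta=1$ gives $|t|^{-1}\|\widehat w\|_{H^2}$; part~(3) with $\theta=1$ gives $|t|^{-1}\|\widehat w\|_{H^3}$; part~(4) gives $|t|^{-1}(\|\widehat w\|_{H^1}+\|x\widehat w\|_{H^2})$. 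The only slightly delicate point is $\|x\widehat w\|_{H^2}$: here $x\widehat w = x\fum\,e^{i\lambda|\fum|^2\log|t|}$, and we use $x\fum\in H^1$ (this is precisely the content of $\um\in\H$, since $\<x\>\fum$ and $\<x\>^3\fum\in L^2$ together with $\d_x\fum\in L^2$ control $x\fum$ and $\d_x(x\fum)=\fum+x\d_x\fum$ in $L^2$, and one more weight/derivative for $H^2$). Collecting, every piece is $\lesssim|t|^{-1}\(\|\um\|_{\H}+(\log|t|)^3\|\um\|_{\H}^7\)$, which is the claimed bound.

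The main obstacle, such as it is, is bookkeeping: one must check that all the Sobolev and weighted-Sobolev norms of the nonlinear profile $\widehat w=\fum e^{i\lambda|\fum|^2\log|t|}$ that are called for actually close within the space $\H$ (i.e. that no more than one $x$-weight together with two derivatives, or three plain derivatives, ever appears), and that the powers of $\log|t|$ and the polynomial degree in $\|\um\|_{\H}$ match the statement — degree $7$ and $(\log|t|)^3$ coming from the worst case in which all three available derivatives land on the phase. No genuinely hard analysis is needed beyond Lemma~\ref{lem:R}, the algebra property of $H^1$, and the embedding $\H\hookrightarrow\Sigma$ (so that $x\fum\in H^1$); the estimate $|M(t)-1|\lesssim|x^2/t|^\theta$ has already been absorbed into Lemma~\ref{lem:R}.
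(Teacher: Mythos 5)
Your treatment of the second and third inequalities follows the paper's own proof: $\theta=1$ in part (1) of Lemma~\ref{lem:R} for the $|t|^{-3/2}$ bound, and the decomposition $\|U(-t)\cR(t)\widehat w\|_\Sigma=\|\cR\widehat w\|_{L^2}+\|\d_x\cR\widehat w\|_{L^2}+\|J(t)\cR\widehat w\|_{L^2}$ combined with parts (2)--(4) of Lemma~\ref{lem:R} (with $\theta=1$) for the $\Sigma$ bound; the bookkeeping of the logarithms and of the degree $7$ in $\|\um\|_{\H}$ is the same as in the paper.

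There is, however, a genuine gap in your proof of the \emph{first} inequality. After some back-and-forth you settle on part (1) of Lemma~\ref{lem:R} with $\theta=0$ and $s\in(1/2,1)$, and assert that $\|\widehat w\|_{H^s}\lesssim\|\fum\|_{H^1}+\|\fum\|_{H^1}^3$ ``since $s<1$''. This is false: any positive (even fractional) amount of differentiation of $\widehat w=\fum e^{i\lambda|\fum|^2\log|t|}$ sees the phase; for instance, by interpolation,
\begin{equation*}
  \|\widehat w\|_{H^s}\le\|\widehat w\|_{L^2}^{1-s}\|\widehat w\|_{H^1}^{s}
  \lesssim \|\fum\|_{L^2}^{1-s}\(\|\fum\|_{H^1}\(1+\log|t|\,\|\fum\|_{L^\infty}^2\)\)^{s},
\end{equation*}
so an unavoidable factor $(\log|t|)^{s}$ appears. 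With $\theta=0$ the time decay is only $|t|^{-1/2}$ and nothing absorbs this logarithm, so your argument yields a bound of the form $\frac{(\log|t|)^s}{\sqrt{|t|}}\(\cdots\)$, not the stated $\frac{C}{\sqrt{|t|}}\(\|\fum\|_{H^1}+\|\fum\|_{H^1}^3\)$. (Your parenthetical claim that ``$s+2\theta\le 1$ is impossible'' is also incorrect: $s>1/2$ and $\theta>0$ with $s+2\theta=1$ is perfectly achievable.) The paper's point, stated explicitly at the start of its proof, is precisely that one must take $\theta>0$ and $s>1/2$ with $s+2\theta=1$: the norm involved is then still $\|\widehat w\|_{H^1}$, which costs one $\log|t|$ and the extra factor $\|\fum\|_{L^\infty}^2\le\|\fum\|_{H^1}^2$, but the improved decay $|t|^{-1/2-\theta}$ absorbs the $\log|t|$, giving exactly the claimed right-hand side. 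This first estimate is the one that drives the smallness hypothesis $\|\fum\|_{H^1}\le\delta_0$ in Proposition~\ref{prop:wave-op}, so the absence of a logarithmic loss there is not cosmetic.
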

\begin{proof}
   If we use
the first point of Lemma~\ref{lem:R} with $\theta=0$ and
$s>1/2$, the phase factor causes the appearance of a $\log|t|$ term,
since the $H^1$-norm of $\widehat w$ is involved. So we rather take
$\theta>0$ and $s>1/2$ such that $s+2\theta=1$, and so,
for $t\le -10$,
\begin{align*}
  \|\cR(t)\widehat w\|_{L^\infty}\lesssim  \frac{\|\widehat
    w\|_{H^1}}{|t|^{1/2+\theta}}& \lesssim \frac{ 1}{|t|^{1/2+\theta}}
\(\|\fum\|_{H^1}+(\log|t|)\|\fum\|_{L^\infty}^2\|\fum\|_{H^1}\)\\ 
&\lesssim \frac{1}{\sqrt{|t|}}\(\|
   \fum\|_{H^1}+\|\fum\|_{H^1}^3\),
\end{align*}
where the logarithmic factor was left out since $\theta>0$.
 It is actually at the level of this estimate that
the $H^1$-norm of $\fum$ appears, instead of merely its
$L^\infty$-norm.

To obtain the stronger time decay, we choose $s=\theta=1$ in the first
point of Lemma~\ref{lem:R}, hence, since $H^3(\R)$ is
an algebra, 
\begin{align*}
  \|\cR(t)\widehat w\|_{L^\infty}\lesssim  \frac{\|\widehat
    w\|_{H^3}}{|t|^{3/2}}& \lesssim \frac{ 1}{|t|^{3/2}}
\(\|\fum\|_{H^3}+(\log|t|)^3\|\fum\|_{H^3}^7\).
\end{align*}

In view of Lemma~\ref{lem:J},
\begin{equation*}
  \|U(-t) \cR\widehat w\|_{\Sigma}= \|\cR\widehat w\|_{L^2}+\|\d_x
 \cR\widehat w\|_{L^2} + \|J(t)\cR\widehat w\|_{L^2},
\end{equation*}
and Lemma~\ref{lem:R} yields
\begin{equation*}
  \|U(-t) \cR(t)\widehat w\|_{\Sigma}\lesssim \frac{1}{|t|}\|\widehat
  w\|_{H^3} + \frac{1}{|t|}\|x\widehat
  w\|_{H^2} .
\end{equation*}
Using the fact that $H^s(\R)$ is an algebra for $s>1/2$, we infer
\begin{align*}
   \|U(-t) \cR(t)\widehat w\|_{\Sigma}&\lesssim
   \frac{1}{|t|}\(\|\fum\|_{H^3} + (\log|t|)^3
   \|\fum\|_{H^3}^7 \)\\
&\quad +  \frac{1}{|t|}\( \|x\fum\|_{H^2} +
  (\log|t|)^2\|x\fum\|_{H^2}^5\)\\
&\lesssim \frac{1}{|t|}\( \|\um\|_{\H} +
  \sum_{j=0,1}(\log|t|)^{2+j}\| \um\|_{\H}^{5+2j}\),
\end{align*}
hence the lemma. 
\end{proof}

\begin{proposition}\label{prop:wave-op}
There exist $\delta_0>0$, $C$, and a polynomial $P$,  such that the following holds.
  For any $u_-\in L^2$ with $\um\in \H$, where $\H$ is
  defined in \eqref{eq:H}, and such that $\|
  \fum\|_{H^1}\le \delta_0$, there exists  a unique
  $u_0\in \Sigma$ such that the solution $u\in C(\R,\Sigma)$
  to \eqref{eq:NLS} with 
  $u_{\mid t=0}=u_0$ satisfies, for $t\le -10$,
  \begin{equation*}
    \left\|U(-t)\(u(t)-u_2(t)\)\right\|_{\Sigma}\le
    C\|\um\|_{\H}^3P\(\|\um\|_{\H}\) \frac{(\log|t|)^4}{|t|}. 
  \end{equation*}
\end{proposition}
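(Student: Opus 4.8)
The strategy is the standard fixed-point construction of the modified wave operator, but run on $u - u_2$ rather than $u - u_1$, and with careful bookkeeping of powers of $\|\um\|_{\H}$ and of logarithmic factors. First I would write down the equation satisfied by the error $r(t) := u(t) - u_2(t)$. Since $u_2 = U(t)\F^{-1}\widehat w$ and $\widehat w(t) = \fum e^{i\lambda|\fum|^2\log|t|}$, a direct computation gives $i\d_t u_2 + \tfrac12 \d_x^2 u_2 = -\lambda U(t)\F^{-1}\big(\tfrac{1}{t}|\fum|^2\widehat w\big)$; comparing with \eqref{eq:NLS}, $r$ solves
\begin{equation*}
  i\d_t r + \tfrac12 \d_x^2 r = \lambda\big(|u|^2 u - |u_1|^2 u_1\big) + \lambda\big(|u_1|^2 u_1 - |u_2|^2 u_2\big) - \lambda\, U(t)\F^{-1}\!\Big(\tfrac{1}{t}|\fum|^2\widehat w\Big) + \lambda|u_2|^2u_2 .
\end{equation*}
The point of isolating $u_1$ is that $|u_1|^2 u_1 = \tfrac{1}{(it)^{1/2}}e^{i x^2/2t} |\fum|^2\fum(x/t) e^{iS_-}$, whose profile (after peeling off $M(t)D(t)$) is exactly $\tfrac{1}{t}|\fum|^2\widehat w$ up to a phase, and the leading singular contribution cancels against the $U(t)\F^{-1}(\tfrac1t|\fum|^2\widehat w)$ source term, modulo a commutator $\cR(t)$ which is handled by Lemma~\ref{lem:Rw}. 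Concretely, $|u_2|^2 u_2 - U(t)\F^{-1}(\tfrac1t|\fum|^2\widehat w) = (|u_2|^2 u_2 - |u_1|^2 u_1) + (\text{terms involving }\cR(t)\widehat w)$, and the latter is $O(|t|^{-3/2+})$ in $L^\infty$ with the weighted bounds from Lemma~\ref{lem:Rw}.

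Next I would set up the integral equation: integrating from $-\infty$ (using $U(-t)r(t) \to 0$, i.e. the scattering condition at $-\infty$) and applying $U(-t)$,
\begin{equation*}
  U(-t) r(t) = -i\lambda \int_{-\infty}^{t} U(-\tau)\Big( |u|^2 u - |u_1|^2 u_1 + |u_1|^2u_1 - U(\tau)\F^{-1}\big(\tfrac1\tau|\fum|^2\widehat w\big)\Big)(\tau)\, d\tau.
\end{equation*}
I would then estimate the map $\Phi: r \mapsto$ (right-hand side) on the complete metric space
\begin{equation*}
  X_\mu := \Big\{ r \in C((-\infty,-10],\Sigma) : \sup_{t\le -10} \tfrac{|t|}{(\log|t|)^4}\, \|U(-t)r(t)\|_{\Sigma} \le \mu \Big\},
\end{equation*}
with $\mu \sim C\|\um\|_{\H}^3 P(\|\um\|_{\H})$. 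For the cubic difference $|u|^2u - |u_1|^2 u_1$, since $u = u_1 + \cR(t)\widehat w + r$ one expands and uses: the algebra property / Hölder to pull out two $L^\infty$ factors of size $\lesssim |t|^{-1/2}$ (from $u_1$ or from $\cR\widehat w$, using Lemma~\ref{lem:u1} and Lemma~\ref{lem:Rw}) and one $L^2$ or $\Sigma$-type factor; the operator $J(t)$ acts as a derivation on the nonlinearity (identity \eqref{eq:Jder}), so $\|U(-t)(\cdots)\|_\Sigma$ is controlled by $\|J(t)(\cdots)\|_{L^2} + \|\d_x(\cdots)\|_{L^2} + \|\cdots\|_{L^2}$, each bounded by $\frac{1}{|t|}\cdot(\text{profile norms})\cdot(\text{stuff})$. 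The worst term is the purely $u_1$-cubic source term $|u_1|^2u_1 - U(\tau)\F^{-1}(\tfrac1\tau|\fum|^2\widehat w)$: after $U(-\tau)$, by the $\cR$-estimates (Lemma~\ref{lem:R}(3)–(4) applied to $\widehat{(\cdot)}= |\fum|^2\widehat w$, whose $H^3$ and weighted $H^2$ norms carry the $(\log|\tau|)^3\|\um\|_{\H}^7$ factors of Lemma~\ref{lem:Rw}) this is $\lesssim \frac{(\log|\tau|)^3}{|\tau|^{2}}\,\|\um\|_{\H}^3 P(\|\um\|_{\H})$, which is integrable near $-\infty$, and the $\int_{-\infty}^t \frac{(\log|\tau|)^3}{\tau^2}d\tau \lesssim \frac{(\log|t|)^3}{|t|}$ feeds back the weight. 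Likewise the $J(t)u_1$ estimate in Lemma~\ref{lem:u1} produces a stray $\log|t|$, so each cubic-in-$u_1$ term contributes at most $\frac{(\log|t|)^4}{|t|}$, which fixes the exponent $4$ in the statement; the $r$-dependent terms come with an extra factor $\mu/|t|^{1-}$ or so and give the contraction for $\|\um\|_{\H}$ (hence $\delta_0$) small enough.

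Finally I would check: (i) $\Phi$ maps $X_\mu$ to itself and is a contraction — standard once the above estimates are in place, taking $\delta_0$ small so that the self-map constant beats $1$ and so that $u_1, u_2$ themselves stay in $C(\Sigma)$ with the stated bounds; (ii) the resulting $r$ gives a genuine solution $u = u_2 + r \in C((-\infty,-10],\Sigma)$ of \eqref{eq:NLS}, which by the $\Sigma$-wellposedness/local theory for the cubic NLS extends to $u\in C(\R,\Sigma)$ with $u_0 := u(0)\in\Sigma$; (iii) uniqueness of $u_0$ with this asymptotic behavior follows from the contraction (any two such solutions have $U(-t)(u-u_2)\in X_\mu$ and agree by the fixed-point uniqueness, then by backward uniqueness for NLS in $\Sigma$). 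I expect the main obstacle to be purely computational rather than conceptual: organizing the trilinear estimates so that (a) exactly the claimed power $\|\um\|_{\H}^3$ is extracted at leading order with all higher powers absorbed into the polynomial $P$, and (b) the logarithmic losses from $S_-$ (via $\|J(t)u_1\|_{L^2}$ and the $H^3$-norm of $\widehat w$) pile up to no more than $(\log|t|)^4$ — i.e. keeping the log-counting tight while still closing the time integral with a gain of exactly $1/|t|$.
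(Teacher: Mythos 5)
Your decomposition is the paper's: the equation $i\d_t\widehat w=\frac{\lambda}{t}|\widehat w|^2\widehat w$, the identity $\frac1t U(t)\F^{-1}(|\widehat w|^2\widehat w)=|u_1|^2u_1+\frac1t\cR(t)(|\widehat w|^2\widehat w)$ that cancels the singular source against the $u_1$-cubic term, Duhamel from $-\infty$, and the trilinear estimates via Lemmas~\ref{lem:u1}, \ref{lem:Rw} and the derivation property \eqref{eq:Jder} of $J(t)$. The genuine gap is in the design of the fixed-point space. You propose to contract directly on
$X_\mu=\{r:\ \sup_{t\le-10}\frac{|t|}{(\log|t|)^4}\|U(-t)r(t)\|_\Sigma\le\mu\}$
with the $\Sigma$-metric, i.e.\ at the final rate of the statement. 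This space is not stable under $\Phi$: the Gronwall-type piece of the cubic difference contains $(|u|+|u_2|)\,|Au_2|\,|u-u_2|$ with $A\in\{\d_x,J\}$, and $\|J(\tau)u_1(\tau)\|_{L^2}$ carries a factor $\log|\tau|$ (Lemma~\ref{lem:u1}); feeding in the ansatz $\|U(-\tau)r\|_\Sigma\lesssim\mu(\log|\tau|)^4/|\tau|$ then produces $\int_{-\infty}^t(\log|\tau|)^5\tau^{-2}d\tau\sim(\log|t|)^5/|t|$, an extra logarithm that cannot be reabsorbed uniformly in $t$ by smallness of the data. The paper avoids this by closing the fixed point at the \emph{cruder} rate $|t|^{-\alpha}$ with $\alpha\in\,]1/2,1[$, where all logarithms are harmless, and moreover measures the distance on the ball by the weighted $L^2$-norm only (a weak metric on a strong ball, which is what makes completeness and the contraction estimate unproblematic); the sharp rate $(\log|t|)^4/|t|$ with the prefactor $\|\um\|_\H^3P(\|\um\|_\H)$ is then read off from the Duhamel formula a posteriori, the source terms $\Phi_2,\Phi_3$ being responsible for it. Your plan is missing this two-step structure, and your claim that $\Phi$ is a contraction on $X_\mu$ for the full $\Sigma$-metric is also unsupported (the contraction estimate you would need involves $\|A u_2\|\,\|u-v\|_{L^\infty}$-type terms that you do not discuss).

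A secondary but real issue is the bookkeeping of which norm is assumed small. The proposition only assumes $\|\fum\|_{H^1}\le\delta_0$, while $\|\um\|_\H$ may be large (it enters through the polynomial $P$). Your closing remark that the contraction holds ``for $\|\um\|_\H$ (hence $\delta_0$) small enough'' conflates the two; the point of the first estimate in Lemma~\ref{lem:Rw}, and of the paper's choice to put $\|\fum\|_{H^1}$ (not $\|\um\|_\H$) as the radius of the ball, is precisely that every contraction constant is controlled by $\|\fum\|_{H^1}^2$ alone. (Minor: your sign in the equation for $u_2$ is off, but this is immaterial.)
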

\begin{remark}
  The choice of the approximate solution $u_2$ is tailored to ensure
that, with the proof presented below, the error term is superlinear in
some norm of $u_-$, uniformly in the limit $t\to
-\infty$. 
On the other hand, the smallness assumption, which is a consequence of
the first estimate in 
Lemma~\ref{lem:Rw},  is stronger than in the
cited references, since Sobolev embedding yields $H^1(\R)\subset
L^\infty(\R)$. 
\end{remark}
\begin{proof}
  Like in \cite{Ozawa91} or \cite{HN06}, the proof relies on a fixed
  point argument near $t=-\infty$. Resuming the computations from
  \cite{HN06} (and keeping the same notations), 
  \begin{equation*}
    i\d_t \widehat w =\frac{\lambda}{t}|\widehat w|^2\widehat w,
  \end{equation*}
and so 
we check that $u_2$ solves the equation
  \begin{equation*}
    i\d_t \F U(-t) u_2 = \lambda \F
    U(-t)\(\frac{1}{t}U(t)\F^{-1}\(|\widehat w|^2\widehat w\)\).
  \end{equation*}
Therefore, we want to solve
\begin{equation*}
    i\d_t \F U(-t)\( u-u_2\) = \lambda \F
    U(-t)\(|u|^2 u-\frac{1}{t}U(t)\F^{-1}\(|\widehat w|^2\widehat w\)\).
\end{equation*}
We insert the term $|u_2|^2 u_2$ into the above equation,
and  use the identity
\begin{align*}
  \frac{1}{t}U(t)\F^{-1}\(|\widehat w|^2\widehat w\)&= \frac{1}{t}MD\F M\F^{-1}
  \(|\widehat w|^2\widehat w\) \\
&= \frac{1}{t}\cR(t)\( |\widehat w|^2\widehat w\)+
  \frac{1}{t}M(t)D(t)\(|\widehat w|^2\widehat w\), 
\end{align*}
with 
\begin{align*}
  \frac{1}{t}M(t)D(t)\(|\widehat w|^2\widehat w\)= M(t)|M(t)D(t)\widehat
  w|^2D(t)\widehat w = |u_1|^2u_1,
\end{align*}
so Duhamel's formula reads, along with the requirement
$U(-t)\(u(t)-u_2(t)\)\to 0 $ as $t\to -\infty$,
\begin{equation}\label{eq:Duhamel1}
  \begin{aligned}
    u(t)=u_2(t)& - i\lambda\int_{-\infty}^t
    U(t-\tau)\(|u|^2u- |u_2|^2 u_2\)(\tau)d\tau\\
&\quad +i\lambda \int_{-\infty}^t
    U(t-\tau)\( |u_2|^2 u_2-|u_1|^2u_1\)(\tau)d\tau\\
 &\quad  -i\lambda \int_{-\infty}^t
    U(t-\tau)\(\frac{1}{\tau}\cR(\tau)\(|\widehat w|^2\widehat w\) \)(\tau)d\tau.
  \end{aligned}
\end{equation}
The last two lines correspond to source terms, involving only the
various approximate solutions, and can be estimated thanks to
Lemma~\ref{lem:R}, as $u_2 = u_1+\cR\widehat w$. 

Denote by $\Phi(u)= u_2+\Phi_1(u)+\Phi_2+\Phi_3$ the right hand side of
\eqref{eq:Duhamel1}, where $\Phi_j$ corresponds to the $j$-th
line (note that $\Phi_2$ and $\Phi_3$ do not depend on $u$). We have
\begin{align*}
  \left|\d_x\(|u_2|^2 u_2-|u_1|^2u_1\)\right|&=  \left|\d_x\(|\cR\widehat w
  +u_1|^2 (\cR\widehat w+u_1)-|u_1|^2u_1\)\right| \\
&\lesssim \(|\cR\widehat w|^2+|u_1|^2\) |\d_x \cR\widehat w| + \(|\cR\widehat w|+|u_1|\)
     |\cR\widehat w| |\d_x  u_1|. 
\end{align*}
Using the formula $J(t)=U(t)xU(-t)$ from Lemma~\ref{lem:J}, as well as
\eqref{eq:Jder} that state that $J(t)$ can be thought of as a
derivative here, we find
\begin{align*}
  \left\| U(-t)\Phi_2(t)\right\|_{\Sigma}&\lesssim \int_{-\infty}^t\(\|\cR(\tau)\widehat
  w\|_{L^\infty}^2+\|u_1(\tau)\|_{L^\infty}^2\) \|U(-\tau)\cR\widehat
 w\|_{\Sigma}d\tau\\
&
 + \int_{-\infty}^t\(\|\cR(\tau)\widehat w\|_{L^\infty}+\|u_1(\tau)\|_{L^\infty}\) 
     \|\cR(\tau)\widehat w\|_{L^\infty} \|U(-\tau)  u_1(\tau)\|_{\Sigma}d\tau.
\end{align*}
We invoke Lemmas~\ref{lem:u1} and \ref{lem:Rw} to estimate the above
terms. More precisely, then factor $\|\cR(\tau)\widehat w\|_{L^\infty}
$ is controlled by the second estimate of Lemma~\ref{lem:Rw}, and we
get, for $\|\fum\|_{H^1}\le 1$ and $t\le -10$:
\begin{equation*}
  \|U(-t)\Phi_2(t)\|_\Sigma\lesssim
  \|\fum\|_{H^1}\|\um\|_{\H}^2P_2\( 
  \|\um\|_{\H}\) \int_{-\infty}^t \frac{(\log |\tau|)^4}{\tau^2}d\tau,
\end{equation*}
for some polynomial $P_2$. 
The term $\Phi_3$ is estimated thanks to Lemma~\ref{lem:R}, and the
choice of parameter is motivated by the previous estimate, keeping in
mind that a factor $\frac{1}{\tau}$ is already present in the
definition of $\Phi_3$. For the $L^2$-estimate, we thus choose
$\theta=1$ in Lemma~\ref{lem:R}, so that
\begin{align*}
  \|\Phi_3(t)\|_{L^2}&\lesssim \int_{-\infty}^t \left\| |\widehat w|^2\widehat
    w\right\|_{H^2}\frac{d\tau}{\tau^2 }\lesssim \int_{-\infty}^t \|
  \widehat w \|_{L^\infty}^2 \| \widehat
                       w\|_{H^2}\frac{d\tau}{\tau^2 }\\
  &\lesssim \|\fum\|_{L^\infty}^2\|\fum \|_{H^2}
    \int_{-\infty}^t
   \(1+ (\log|\tau|)^2 \|\fum \|_{H^2}^4\) \frac{d\tau}{\tau^2 }.
\end{align*}
The norm  $\|xU(-t)\Phi_3(t)\|_{L^2}=\|J(t)\Phi_3(t)\|_{L^2}$ is
estimate similarly, by setting again $\theta=1$ (in the third point of
Lemma~\ref{lem:R}, and we get, since the $H^2$-norm is replaced by an
$H^3$-norm in the above computation,
\begin{equation*}
  \|J(t)\Phi_3(t)\|_{L^2}\lesssim \|\fum\|_{L^\infty}^2\|\fum \|_{H^3}
    \int_{-\infty}^t
   \(1+ (\log|\tau|)^3\|\fum \|_{H^3}^6\) \frac{d\tau}{\tau^2 }.
 \end{equation*}
 The choice of suitable parameters for the $\dot H^1$-norm was already
 made in the statement of Lemma~\ref{lem:R}, and we have
 \begin{equation*}
  \|\d_x\Phi_3(t)\|_{L^2}\lesssim 
    \int_{-\infty}^t
   \(\||\widehat w|^2 \widehat w\|_{H^1} + \|x|\widehat w|^2 \widehat w\|_{H^2}\)
   \frac{d\tau}{\tau^2 }. 
 \end{equation*}
 The term involving the $H^1$-norm has been estimated above, so we
 focus on the other norm. Direct computations yield
 \begin{equation*}
  \left| \d_x^2\( x|\widehat w|^2 \widehat w\)\right| \lesssim  
     |x||\widehat w|^2 |\d_x^2\widehat w| + |\widehat w|^2 |\d_x\widehat w| +
     |x||\widehat w||\d_x\widehat w|^2,
   \end{equation*}
   and we get
   \begin{align*}
  \|\d_x\Phi_3(t)\|_{L^2}&\lesssim 
  \|\fum\|_{L^\infty}^2 \|\fum\|_{H^1}  \int_{-\infty}^t
   \(1+ (\log|\tau|)\|\fum\|_{H^1}^2\)
    \frac{d\tau}{\tau^2 }\\
     &\quad +\|\fum\|_{L^\infty}\|\um\|_{\H}^2\int_{-\infty}^t
     \(1+ (\log|\tau|)^2 \|\um\|_{\H}^4\)     \frac{d\tau}{\tau^2 }.                   
   \end{align*}
   The source term is therefore controlled, for $t\le -10$, by:
   \begin{equation}\label{eq:sourcePhi}
     \|U(-t)(\Phi_2(t)+\Phi_3(t))\|_{\Sigma}\lesssim
     \|\fum\|_{H^1}\|\um\|_{\H}^2P\(\|\um\|_{\H}\)\int_{-\infty}^t
      \frac{\(\log|\tau|\)^4}{\tau^2}d\tau,
   \end{equation}
   for some polynomial $P$ whose precise expression is irrelevant.

   The end of the proof consists of a fixed point argument. Mimicking
   \cite{Ozawa91}, for $\alpha\in ]1/2,1[$ arbitrary (but fixed),
  and $T\gg 1$, introduce the
   space
   \begin{align*}
     X^\alpha(T)&=\Bigl\{ u\in L^\infty(]-\infty,-T],L^2) \text{ such that }t\mapsto U(-t)u(t)\in
          L^\infty(]-\infty,-T],\Sigma), \\
   &\qquad   \sup_{t\le
     -T}|t|^\alpha\|U(-t)(u(t)-u_2(t))\|_{\Sigma}\le    \|\fum\|_{H^1}\Bigr\}.
   \end{align*}
   and define on $ X^\alpha(T)$ the metric
   \begin{equation*}
    d(u,v) = \sup_{t\le
     -T}|t|^\alpha\|u(t)-v(t)\|_{L^2}.
 \end{equation*}
 Note that here, we choose to measure distance by considering the
 $L^2$ norm only: $X^\alpha(T)$, equipped with this distance, is
 a complete metric space. 
 
  For $u\in  X^\alpha(T)$, $\Phi_1(u)$ is estimated by
  \begin{align*}
    \|U(-t)\Phi_1(u(t))\|_{L^2}&\lesssim \int_{-\infty}^t
    \(\|u(\tau)\|_{L^\infty}^2+
    \|u_2(\tau)\|_{L^\infty}^2\)\|u(\tau)-u_2(\tau)\|_{L^2}d\tau \\
    &\lesssim
      \|\fum\|_{H^1}^2\int_{-\infty}^t\frac{d\tau}{\tau^{1+\alpha}}\lesssim \frac{ \|\fum\|_{H^1}^2}{|t|^\alpha}.
  \end{align*}
  For $A\in\{\d_x,J(t)\}$, we have
  \begin{equation}\label{eq:est-gen}
   \left|A\( |u|^2u-|u_2|^2u_2\)\right| \lesssim |u|^2 |A(u-u_2)|+
                                          \(|u|+|u_2|\)|A u_2||u-u_2|.  
  \end{equation}
  Therefore,
  \begin{align*}
     \|U(-t)\Phi_1(u(t))\|_{\Sigma}&\lesssim \int_{-\infty}^t
    \|u(\tau)\|_{L^\infty}^2\|U(-\tau)(u(\tau)-u_2(\tau))\|_{\Sigma}d\tau
    \\
    + \int_{-\infty}^t
      (\|u(\tau)\|_{L^\infty} &+\|u_2(\tau)\|_{L^\infty})\|U(-\tau)u_2(\tau)\|_{\Sigma}
   \|u(\tau)-u_2(\tau)\|_{L^\infty}
      d\tau\\
  &\lesssim \|\fum\|_{H^1}^2P_3\(\|\um\|_{\H}\)\int_{-\infty}^t
    \frac{\log|\tau|}{\tau^{1+\alpha}}d\tau , 
  \end{align*}
 for some polynomial $P_3$ whose precise expression is irrelevant,
 where we have used Lemmas~\ref{lem:u1} and \ref{lem:Rw} to estimate 
  $\|U(-\tau)u_2(\tau)\|_{\Sigma}$. The 
  $L^\infty$-norm of $u(\tau)-u_2(\tau)$ is controlled by
  $|\tau|^{-1/2-\alpha}$ in view of Lemma~\ref{lem:J} and the
  definition of $X^\alpha(T)$. Therefore, by choosing $T$
  sufficiently large, we check that $\Phi$ maps $X^\alpha(T)$ to
  itself.

  To conclude by a fixed point argument, we show that
  $\Phi$ is a contraction provided
  that $\|\fum\|_{H^1}$ is sufficiently small. Indeed, for $u,v\in
  X^\alpha(T)$, 
  \begin{equation*}
    \Phi(u)(t)-\Phi(v)(t) =- i\lambda \int_{-\infty}^t
    U(t-\tau)\(|u|^2u-|v|^2v\)(\tau)d\tau, 
  \end{equation*}
  and so, for $t\le -10$,
  \begin{align*}
    \left\|\Phi(u)(t)-\Phi(v)(t)\right\|_{L^2}&\lesssim \int_{-\infty}^t
    \(\|u(\tau)\|_{L^\infty}^2+ \|v(\tau)\|_{L^\infty}^2\)\|u(\tau)-v(\tau)\|_{L^2}d\tau
    \\
   &\lesssim \|\fum\|_{H^1}^2 d(u,v) \int_{-\infty}^t\frac{d\tau}{\tau^{1+\alpha}},
  \end{align*}
  where we have used
  \begin{align*}
    \|u(\tau)\|_{L^\infty}+ \|v(\tau)\|_{L^\infty}& \le
    2\|u_2(\tau)\|_{L^\infty} + C\|u(\tau)-u_2(\tau)\|_{H^1}+
                                                    C\|v(\tau)-u_2(\tau)\|_{H^1}\\
    &\lesssim \frac{\|\fum\|_{H^1} }{\sqrt{|\tau|}},
  \end{align*}
  since $\alpha>1/2$. Therefore,
  \begin{equation*}
    d\(\Phi(u),\Phi(v)\)\lesssim \|\fum\|_{H^1}^2 d(u,v), 
  \end{equation*}
  hence the result provided that $\|\fum\|_{H^1}$ is
  sufficiently small.
 \end{proof}

\section{Behavior of the modified wave operator near the origin}
\label{sec:wave-zero}

\subsection{Leading order asymptotic behavior}

Let $u_-^\eps=\eps v_-$ with $ v_-\in \H$ independent of
$\eps$. Denote by $u^\eps$ the solution provided by
Proposition~\ref{prop:wave-op}. Then like $u_-^\eps$, $u^\eps$ is of order
$\eps$, and the remainder in Proposition~\ref{prop:wave-op} is
$\O(\eps^3)$. The long range phase correction $S_-^\eps$ is 
$\O(\eps^2\log|t|)$, for $t\le -10$: its contribution is negligible
for times $t$ such that $\eps^2|\log|t||\ll 1$, so in particular we can match with a linear
solution at $t^\eps_\gamma = -1/\eps^\gamma$,
\begin{equation*}
  i\d_t \u^\eps_1 +\frac{1}{2}\d_x^2 \u^\eps_1 =0 ,
\end{equation*}
with
\begin{equation*}
  \u^\eps_{1\mid
    t=-1/\eps^\gamma} = 
  \frac{1}{(it)^{1/2}}e^{i\frac{x^2}{2t}}\F(u_-^\eps)\(\frac{x}{t}\)
  \big|_{t=-1/\eps^\gamma}. 
\end{equation*}
The right hand side is $M(t)D(t)\F u_-^\eps$ evaluated at $t = -1/\eps^\gamma$. 
In view of Lemma~\ref{lem:R},  up to a small error term, 
\begin{equation*}
 \u_1^\eps(t) =
 U(t)u_-^\eps, \text{ or, equivalently, }\u_1^\eps=\eps v_1,
 \text{ with }v_1(t)=U(t)v_-.
\end{equation*}
We choose this definition for the first order approximation
$\u_1^\eps$. 
Let $t\ge -1/\eps^\gamma$. Duhamel's formula implies
\begin{align*}
  U(-t)\(u^\eps(t)- v^\eps_1(t)\) &= U\(-s\)\(
  u^\eps(s)- u^\eps_2(s)+u_2^\eps(s)
                                    -\u^\eps_1(s)\)\Big|_{s=-1/\eps^\gamma} \\
  &\quad-i\lambda
  \int_{-1/\eps^\gamma}^t U(-\tau)\(|u^\eps|^2u^\eps\)(\tau)d\tau,
\end{align*}
that is
\begin{align*}
  U(-t)u^\eps(t)- u_-^\eps &= U\(-s\)\(
  u^\eps(s)- u^\eps_2(s)\)\Big|_{s=-1/\eps^\gamma}
                             +U(-s)u_2^\eps(s)\Big|_{s=-1/\eps^\gamma}
                           -  u_-^\eps\\ 
  &\quad-i\lambda
  \int_{-1/\eps^\gamma}^t U(-\tau)\(|u^\eps|^2u^\eps\)(\tau)d\tau,
\end{align*}
The first term on the right hand side is estimated thanks to
Proposition~\ref{prop:wave-op}:
\begin{equation*}
  \left\|U\(-s\)\(
  u^\eps(s)- u^\eps_2(s)\)\Big|_{s=-1/\eps^\gamma}\right\|_{\Sigma}\lesssim
  \eps^{3+\gamma}|\log\eps|.
\end{equation*}
On the other hand,
\begin{equation*}
  \left\|U(-s)u_2^\eps(s)\Big|_{s=-1/\eps^\gamma}
                           -  u_-^\eps\right\|_{\Sigma} =
                         \|\F^{-1}\widehat w_{\mid s=-1/\eps^\gamma}- u_-^\eps\|_{\Sigma}= \|\widehat
                         w_{\mid s=-1/\eps^\gamma}- \F( u_-^\eps)\|_{\Sigma}.
\end{equation*}
Recalling that
\begin{equation*}
  \widehat w =  \F( u_-^\eps) \exp \(i|\F( u_-^\eps) |^2\log|t|\), 
\end{equation*}
we have directly
\begin{align*}
  \|\widehat w- \F( u_-^\eps)\|_{\Sigma}&\lesssim \|\<x\> \(\widehat w- \F(
  u_-^\eps)\)\|_{L^2} + \left\|\d_x \widehat{u_-^\eps}\( \exp \(i|\F( u_-^\eps)
                                      |^2\log|t|\)-1\)\right\|_{L^2} \\
  &\quad + |\log\eps|\left\| \F( u_-^\eps)\d_x |\F( u_-^\eps)
    |^2\right\|_{L^2}\\
  &\lesssim \eps^3|\log \eps|. 
\end{align*}
The integral term can be estimated in $\Sigma$ by
\begin{equation*}
  \int_{-1/\eps^\gamma}^t
  \|u^\eps(\tau)\|_{L^\infty}^2\|U(-\tau)u^\eps(\tau)\|_{\Sigma}d\tau.
\end{equation*}
Since $\| U(T)u^\eps(-T)\|_{\Sigma}\lesssim \eps$
from Lemmas~\ref{lem:u1} and \ref{lem:Rw}, and
Proposition~\ref{prop:wave-op} (where $T$ 
is independent of $\eps$), we have the following uniform estimate from
\cite[Theorem~1.1]{HN98} (see also \cite{KatoPusateri2011}), provided
that $\eps>0$ 
is sufficiently small,
\begin{equation}\label{eq:unifLinfty}
  \|u^\eps(\tau)\|_{L^\infty}\lesssim
  \frac{\eps}{\<\tau\>^{1/2}},\quad \forall \tau\in \R. 
\end{equation}
Writing
\begin{equation*}
  \|U(-\tau)u^\eps(\tau)\|_{\Sigma}\le
  \|U(-\tau)\(u^\eps(\tau)-\u_1^\eps(\tau)\)\|_{\Sigma} +
  \|U(-\tau)\u_1^\eps(\tau)\|_{\Sigma}, 
\end{equation*}
and using the obvious fact that
$\|U(-\tau)\u_1^\eps(\tau)\|_{\Sigma}=\eps\|v_-\|_{\Sigma}$, we have, for $t\ge
-1/\eps^\gamma$, 
\begin{equation*}
  \left\|   U(-t)u^\eps(t)- u_-^\eps\right\|_{\Sigma}\lesssim
  \eps^{3}|\log \eps| + \eps^2 \int_{-1/\eps^\gamma}^t \left\|
    U(-\tau)u^\eps(\tau)- u_-^\eps\right\|_{\Sigma}
  \frac{d\tau}{\<\tau\>} +  \eps^3 \int_{-1/\eps^\gamma}^t \frac{d\tau}{\<\tau\>}.
\end{equation*}
 Therefore, Gronwall
 lemma yields
 \begin{equation*}
   \sup_{-1/\eps^\gamma\le t\le T}\left\|   U(-t)u^\eps(t)-
     u_-^\eps\right\|_{\Sigma} \lesssim 
    \eps^{3}|\log \eps| e^{ C \eps^2|\log \eps|+C\eps^2\log\<T\>},
  \end{equation*}
  for some constant $C$ independent of $\eps$ and $T$. In particular,
  for any $\delta\in ]0,1]$, 
  the right hand side is $\O(\eps^{3-\delta})$ on
  $[-1/\eps^\gamma,1/\eps^\beta]$ for any $\beta>0$. This estimate
  implies the property
  \begin{equation*}
    \W (\eps v_-)=\eps v_- +\O(\eps^{3-\eta}),
  \end{equation*}
  for any $\eta>0$. In the next subsection, we improve this estimate
  by describing the first corrector term.  
\subsection{Next term in the asymptotic expansion}
\label{sec:next1}

Following e.g. \cite{CaGa09}, a refined asymptotic expansion for
$u^\eps$, at least on bounded time intervals, is given by the solution
$\u_2^\eps$ to
\begin{equation*}
  i\d_t \u_2^\eps +\frac{1}{2}\d_x^2 \u_2^\eps = \lambda
  |\u_1^\eps|^2\u_1^\eps, 
\end{equation*}
where the Cauchy data must be chosen carefully.
Formally, as $\u_1^\eps$ is
  of order $\eps$, $\u_2^\eps$ is of order $\eps^3$, provided that its
  Cauchy datum is $\O(\eps^3)$.
 Set
\begin{equation*}
  \u_{\rm app}^\eps = \u_1^\eps+\u^\eps_2.
\end{equation*}
We now have
\begin{align*}
  U(-t)\(u^\eps(t)- \u^\eps_{\rm app}(t)\) &= U\(-s\)\(
  u^\eps(s)- u^\eps_2(s)+u_2^\eps(s)
  - \u_{\rm app}^\eps(s)\)\Big|_{s=-1/\eps^\gamma} \\
  &\quad-i\lambda
  \int_{-1/\eps^\gamma}^t U(-\tau)\(|u^\eps|^2u^\eps-
    |\u_1^\eps|^2\u_1^\eps\)(\tau)d\tau.
\end{align*}
We have seen  before
that
\begin{equation*}
  \left\|U\(-s\)\(
  u^\eps(s)- u^\eps_2(s)\)\Big|_{s=-1/\eps^\gamma}\right\|_{\Sigma}\lesssim
  \eps^{3+\gamma}|\log\eps|=o(\eps^3). 
\end{equation*}
So if we want to catch the $\eps^3$ term, this is good. Making $U(-s)(u_2^\eps(s)
 -   \u_{\rm app}^\eps(s))$ small ($o(\eps^3)$ in $\Sigma$) at
 $s=-1/\eps^\gamma$ is what should 
 tell us how to choose the data for $\u_2^\eps$. Indeed, the integral
 term is estimated in $L^2$ ($\Sigma$ will require more care) by
 \begin{equation*}
     \int_{-1/\eps^\gamma}^t \(\|u^\eps(\tau)\|_{L^\infty}^2 +
     \|\u_1^\eps(\tau)\|_{L^\infty}^2
     \)\|u^\eps(\tau)-v_1^\eps(\tau)\|_{L^2}d\tau\lesssim \eps^2
     \int_{-1/\eps^\gamma}^t
     \|u^\eps(\tau)-\u_1^\eps(\tau)\|_{L^2}\frac{d\tau}{\<\tau\>}, 
   \end{equation*}
   and we now write
   \begin{equation*}
     \|u^\eps(\tau)-\u_1^\eps(\tau)\|_{L^2}\le \|u^\eps(\tau)-\u_{\rm
       app}^\eps(\tau)\|_{L^2}+\|\u_2^\eps(\tau)\|_{L^2}, 
   \end{equation*}
so the last term yields a smaller contribution (in terms of $\eps$ at
least) than in the previous
subsection.

Thus, everything seems to boil down to choosing $\u_2^\eps$ at time
$-1/\eps^\gamma$ in an efficient way, so that
\begin{equation*}
  \left\|U(-s)(u_2^\eps(s)-\u_{\rm
      app}^\eps(s))\Big|_{s=-1/\eps^\gamma}\right\|_\Sigma=o(\eps^3). 
\end{equation*}
By construction,
\begin{equation*}
  U(-s)(u_2^\eps(s))\Big|_{s=-1/\eps^\gamma}= \F^{-1}\( \F(u_-^\eps)
  \exp\(-i\lambda\gamma |\F(u_-^\eps)|^2\log \eps\)\). 
\end{equation*}
Taking into account the second term in the asymptotic expansion of the
exponential in $\widehat w$, we define $\u_2^\eps$ by
\begin{equation}\label{eq:v2}
  \begin{aligned}
 U(-t) \u_2^\eps (t) &=   -i\lambda\gamma \eps^3(\log
  \eps)\F^{-1}\( 
  |\F(v_-)|^2\F(v_-) \)\\
  &\quad-i\lambda\eps^3\int_{-1/\eps^\gamma}^t U(-\tau) \(
  |U(\tau)v_-|^2U(\tau)v_-\)d\tau. 
\end{aligned}
\end{equation}
\begin{lemma}\label{lem:renormalisation}
  Let $v_-\in \H$. For every $t\in \R$, $U(-t)\u^\eps_2(t)/\eps^3$ converges in $L^2\cap L^\infty(\R)$, and the
  limit is independent of $\gamma$.  This limit is given by
  \begin{align*}
   U(-t) v_2(t) & = -i\lambda\int_{-\infty}^{-1} \(  U(-\tau)\(
      |U(\tau)v_-|^2U(\tau)v_-\) +\frac{1}{|\tau|} \F^{-1}\( 
      |\F(v_-)|^2\F(v_-)\)\) d\tau\\
&\quad  -i\lambda\int_{-1}^t U(-\tau) \( 
 |U(\tau)v_-|^2U(\tau)v_-\)d\tau.
  \end{align*}
   The limit also holds in $\Sigma$,
  and for all $t\in \R$,
  \begin{equation*}
    \left\|
      U(-t)\(\frac{\u^\eps_2(t)}{\eps^3}-v_2(t)\)\right\|_{\Sigma}\lesssim
    \eps^\gamma.
  \end{equation*}
\end{lemma}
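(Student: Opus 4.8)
The claim has two parts: first, that for each fixed $t$ the rescaled quantity $U(-t)\u_2^\eps(t)/\eps^3$ converges, as $\eps\to 0$, to the stated $\gamma$-independent limit $U(-t)v_2(t)$; second, that this convergence is quantitative in $\Sigma$ with rate $\eps^\gamma$. The starting point is the explicit formula \eqref{eq:v2}. Dividing by $\eps^3$, one gets
\begin{equation*}
  \frac{U(-t)\u_2^\eps(t)}{\eps^3}= -i\lambda\gamma(\log\eps)\F^{-1}\(|\F(v_-)|^2\F(v_-)\) - i\lambda\int_{-1/\eps^\gamma}^t U(-\tau)\(|U(\tau)v_-|^2U(\tau)v_-\)d\tau,
\end{equation*}
so the entire matter reduces to analyzing the integral $\int_{-1/\eps^\gamma}^t$ and showing that its divergence as $\eps\to0$ is exactly a $\gamma(\log\eps)$ term that is cancelled by the first (renormalizing) term. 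The key is a precise asymptotic expansion of the integrand $U(-\tau)\(|U(\tau)v_-|^2U(\tau)v_-\)$ as $\tau\to-\infty$.

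\textbf{Step 1: Asymptotics of the integrand.} Write $U(\tau)=M(\tau)D(\tau)\F M(\tau) = M(\tau)D(\tau)\F + M(\tau)D(\tau)\F(M(\tau)-1)$, i.e. $U(\tau)v_- = M(\tau)D(\tau)\F v_- + (\text{error involving }\cR)$. Since $|M(\tau)D(\tau)g|^2 M(\tau)D(\tau)g = M(\tau) D(\tau)(|g|^2 g)$ and $U(-\tau)M(\tau)D(\tau)h = M(-\tau)^{-1}\cdots$, one computes that the leading term of $U(-\tau)\(|U(\tau)v_-|^2U(\tau)v_-\)$ is $\tfrac{1}{|\tau|}\F^{-1}\(|\F v_-|^2\F v_-\)$ — this is precisely the structure already exploited in the proof of Proposition~\ref{prop:wave-op} via the identity $\tfrac1t U(t)\F^{-1}(|\widehat w|^2\widehat w) = \tfrac1t\cR(t)(\cdots)+|u_1|^2u_1$. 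The remainder, $U(-\tau)\(|U(\tau)v_-|^2U(\tau)v_-\) - \tfrac{1}{|\tau|}\F^{-1}\(|\F v_-|^2\F v_-\)$, involves factors of $\cR(\tau)$ acting on $v_-$ (and on the cubic expression built from $v_-$), so by Lemma~\ref{lem:R} — specifically using parts (2), (3) and (4) with $\theta$ close to $1$, together with $\|\cR(\tau)g\|_{L^\infty}\lesssim |\tau|^{-3/2}\|g\|_{H^3}$ from part (1) with $s=\theta=1$ — it is $\O(|\tau|^{-1-\theta})$ in $\Sigma$ for $\theta$ arbitrarily close to $1$; here one uses $v_-\in\H$, which controls the relevant $H^3$ and weighted norms, and the fact that $\Sigma$ is a Banach algebra embedded in $L^\infty$ to handle the cubic nonlinearity.

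\textbf{Step 2: Splitting and passing to the limit.} Split $\int_{-1/\eps^\gamma}^t = \int_{-1/\eps^\gamma}^{-1} + \int_{-1}^t$. In the first integral, subtract and add $\tfrac{1}{|\tau|}\F^{-1}\(|\F v_-|^2\F v_-\)$: the difference is absolutely integrable on $(-\infty,-1)$ in $\Sigma$ by Step~1 (since $\int^{-1}_{-\infty}|\tau|^{-1-\theta}d\tau<\infty$), so it converges to $\int_{-\infty}^{-1}$ of the difference, with tail error $\O\(\int_{-\infty}^{-1/\eps^\gamma}|\tau|^{-1-\theta}d\tau\) = \O(\eps^{\gamma\theta})$; choosing $\theta$ close enough to $1$ gives $\O(\eps^\gamma)$ (this is where the fixed but arbitrary $\theta<1$ costs us an $\eta$, consistent with the theorem). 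Meanwhile $\int_{-1/\eps^\gamma}^{-1}\tfrac{1}{|\tau|}d\tau = \log(1/\eps^\gamma) = -\gamma\log\eps$, so $-i\lambda\int_{-1/\eps^\gamma}^{-1}\tfrac{1}{|\tau|}\F^{-1}\(|\F v_-|^2\F v_-\)d\tau = i\lambda\gamma(\log\eps)\F^{-1}\(|\F v_-|^2\F v_-\)$, which exactly cancels the renormalizing term $-i\lambda\gamma(\log\eps)\F^{-1}(|\F v_-|^2\F v_-)$ in the formula above. The integral $\int_{-1}^t$ involves a continuous $\Sigma$-valued integrand on a compact interval, hence is fixed (no $\eps$-dependence) and $\gamma$-independent. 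Assembling the pieces yields exactly the claimed formula for $U(-t)v_2(t)$, with the total error bounded by $\O(\eps^\gamma)$ in $\Sigma$ (and a fortiori in $L^2\cap L^\infty$, since $\Sigma\hookrightarrow L^\infty$).

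\textbf{Main obstacle.} The delicate point is Step~1: obtaining the integrand's expansion to the precision $U(-\tau)\(|U(\tau)v_-|^2U(\tau)v_-\) = \tfrac{1}{|\tau|}\F^{-1}\(|\F v_-|^2\F v_-\) + \O(|\tau|^{-1-\theta})$ uniformly in $\Sigma$, which requires careful bookkeeping of the three $U(\tau)$ factors, commuting $J(\tau)$ through the cubic nonlinearity via \eqref{eq:Jder}, and applying the four parts of Lemma~\ref{lem:R} with coordinated choices of $s$ and $\theta$ — exactly the kind of estimate the remark after Lemma~\ref{lem:R} warns is subtle, and which mirrors (with $M(\tau)-1$ present) the argument already carried out for $\Phi_2$ and $\Phi_3$ in the proof of Proposition~\ref{prop:wave-op}. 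Once this decay estimate is in hand, the limit passage and the logarithmic cancellation are routine.
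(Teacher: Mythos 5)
Your proposal follows essentially the same route as the paper's proof: expand the integrand $U(-\tau)\bigl(|U(\tau)v_-|^2U(\tau)v_-\bigr)$ as $\tau\to-\infty$ into its leading $\tfrac{1}{|\tau|}\F^{-1}\bigl(|\F v_-|^2\F v_-\bigr)$ part plus an $\O(|\tau|^{-1-\theta})$ remainder controlled by the $M(\tau)-1$ estimates underlying Lemma~\ref{lem:R}, split the integral at $\tau=-1$, cancel the resulting $\gamma\log\eps$ divergence against the counterterm in \eqref{eq:v2}, and bound the tail of the convergent piece. The one adjustment needed is to take $\theta=1$ (permitted by \eqref{eq:sinus} and affordable since $v_-\in\H$ supplies the extra derivatives and moments), exactly as the paper does, so the tail is $\O\bigl(\int_{-\infty}^{-1/\eps^\gamma}\tau^{-2}\,d\tau\bigr)=\O(\eps^\gamma)$ and you obtain the stated rate $\eps^\gamma$ rather than the weaker $\eps^{\gamma\theta}$ your fixed $\theta<1$ would give.
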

\begin{remark}
  We emphasize that even though the limit $v_2$ is independent of
$\gamma$, the error estimate improves for large values of $\gamma$. In
view of the definition of $\u_2^\eps$, the matching condition at
$t=-1/\eps^\gamma$ 
between $u_2^\eps$ and $\u_{\rm app}^\eps$ is not better than
$\O\(\eps^5(\log\eps)^2\)$, so it is no use (in this paper) to consider
$\gamma>2$. 
\end{remark}
\begin{proof}
  The proof consists in writing a precise asymptotic expansion of the
  argument of the integral involved in \eqref{eq:v2}. Writing, for any
  $\theta\in ]0,1]$,
  \begin{equation*}
    M(t) = e^{i\frac{x^2}{2t}} = 1+\O\(\left|\frac{x^2}{t}\right|^\theta\) ,
  \end{equation*}
  we have, in the same vein as in Lemma~\ref{lem:R},
  \begin{equation*}
    U(\tau)v_-(x) = M(\tau)D(\tau)\F v_- + R_1(\tau,x),
  \end{equation*}
  where, as $\tau\to -\infty$
  \begin{equation*}
    \|R_1(\tau)\|_{L^2} = \O\(\frac{1}{|\tau|^{\theta}}\),\quad
    \|R_1(\tau)\|_{L^\infty} = \O\(\frac{1}{|\tau|^{\theta+1/2}}\).
  \end{equation*}
  We infer, as $\tau\to -\infty$,
  \begin{equation*}
    |U(\tau)v_-|^2U(\tau)v_-=
    e^{i\frac{\pi}{4}}\frac{e^{i\frac{x^2}{2\tau}}}{|\tau|^{3/2}}
    \(|\F(v_-)|^2\F(v_-)\)\(\frac{x}{\tau}\)
    + R_2(\tau,x),
  \end{equation*}
  with
  \begin{equation*}
     \|R_2(\tau)\|_{L^2} = \O\(\frac{1}{|\tau|^{1+\theta}}\),\quad
    \|R_2(\tau)\|_{L^\infty} = \O\(\frac{1}{|\tau|^{3/2+\theta}}\).
  \end{equation*}
  In view of \eqref{eq:U(-t)}, we next compute
  \begin{align*}
   U(-\tau)\(  |U(\tau)v_-|^2U(\tau)v_-\) &= ie^{-i\frac{x^2}{2\tau}}
   \F^{-1}D\(\frac{1}{\tau}\) \( \frac{ e^{i\frac{\pi}{4}}}{|\tau|^{3/2}}
                                            \(|\F(v_-)|^2\F(v_-)\)\(\frac{x}{\tau}\)\)\\
    &\quad
    +ie^{-i\frac{x^2}{2\tau}}
   \F^{-1}D\(\frac{1}{\tau}\) \(  e^{-i\frac{x^2}{2\tau}}
      R_2(\tau,x)\)\\
    &= -\frac{e^{-i\frac{x^2}{2\tau}}}{|\tau|} \F^{-1}\(
      |\F(v_-)|^2\F(v_-)\)(x) + R_3(\tau,x)\\
   &= -\frac{1}{|\tau|} \F^{-1}\(
      |\F(v_-)|^2\F(v_-)\)(x) + R_4(\tau,x),
  \end{align*}
  where
\begin{equation*}
  \|R_4(\tau)\|_{L^2\cap L^\infty} =
  \O\(\frac{1}{|\tau|^{1+\theta}}\).
\end{equation*}
Set $\theta=1$,
and write
\begin{align*}
  \frac{1}{\lambda \eps^3} U(-t)\u_2^\eps(t) &= -i \gamma
  (\log\eps)\F^{-1}  \( |\F(v_-)|^2\F(v_-) \)
  -i\int_{-1/\eps^\gamma}^{-1} U(-\tau) \(
    |U(\tau)v_-|^2U(\tau)v_-\)d\tau\\
&\quad-i\int_{-1}^t U(-\tau) \(
 |U(\tau)v_-|^2U(\tau)v_-\)d\tau\\
 &= -i\gamma
  (\log\eps)\F^{-1}  \( |\F(v_-)|^2\F(v_-) \)\\
  &\quad-i\int_{-1/\eps^\gamma}^{-1} \(-\frac{1}{|\tau|} \F^{-1}\(
      |\F(v_-)|^2\F(v_-)\) + R_4(\tau)\)d\tau\\
&\quad-i\int_{-1}^t U(-\tau) \(
 |U(\tau)v_-|^2U(\tau)v_-\)d\tau\\
 &= -i\int_{-\infty}^{-1} R_4(\tau)d\tau +\underbrace{ \O\(
   \int_{-\infty}^{-1/\eps^\gamma} \frac{d\tau}{\tau^2}\)}_{=\O(\eps^\gamma)}
-i\int_{-1}^t U(-\tau) \(
 |U(\tau)v_-|^2U(\tau)v_-\)d\tau.
\end{align*}
Therefore,
\begin{equation*}
  U(-t)v_2(t) = -i\lambda\int_{-\infty}^{-1} R_4(\tau)d\tau
  -i\lambda\int_{-1}^t U(-\tau) \( 
 |U(\tau)v_-|^2U(\tau)v_-\)d\tau,
\end{equation*}
and, by construction,
\begin{equation*}
   R_4(\tau) =  U(-\tau)\(  |U(\tau)v_-|^2U(\tau)v_-\) +\frac{1}{|\tau|} \F^{-1}\(
      |\F(v_-)|^2\F(v_-)\),
    \end{equation*}
    hence the formula of the lemma.
In particular, by the same arguments as above, the second integral
diverges logarithmically as $t\to +\infty$. We leave out the convergence in
$\Sigma$ (since $v_-\in \H$, so we can pay some 
momentum estimate when controlling $M-1$),  which relies on similar ideas,
like the proof of 
Lemma~\ref{lem:R}. Note that the present argument 
\end{proof}
We can now improve the error estimate proven in the previous
subsection with only the leading order approximation $\u_1^\eps$. By
construction,
\begin{equation*}
   \left\|U(-s)(u_2^\eps(s)-\u_{\rm
      app}^\eps(s))\Big|_{s=-1/\eps^\gamma}\right\|_\Sigma=\O\(\eps^5(\log \eps)^2\),
\end{equation*}
and we have, if $0<\gamma<2$,
\begin{align*}
  \|U(-t)\(u^\eps(t)- \u^\eps_{\rm app}(t)\)\|_{\Sigma} &\lesssim
  \eps^{3+\gamma}|\log \eps|\\
  &\quad+
  \int_{-1/\eps^\gamma}^t \| U(-\tau)\(|u^\eps|^2u^\eps-
    |\u_1^\eps|^2\u_1^\eps\)(\tau)\|_{\Sigma}d\tau.
\end{align*}
In view of \eqref{eq:est-gen},
\begin{align*}
   \| U(-\tau)\(|u^\eps|^2u^\eps-
    |\u_1^\eps|^2\u_1^\eps\)(\tau)\|_{\Sigma} \lesssim
  \| &u^\eps(\tau)\|_{L^\infty}^2 \| U(-\tau)\(u^\eps-
  \u_1^\eps\)(\tau)\|_{\Sigma} \\
  + \( \|u^\eps(\tau)\|_{L^\infty} +
  \|\u_1^\eps(\tau)\|_{L^\infty} \) &\|U(-\tau)\u_1^\eps(\tau)\|_{\Sigma}
  \|u^\eps(\tau)-\u_1^\eps(\tau)\|_{L^\infty}. 
\end{align*}
Recalling \eqref{eq:unifLinfty}, we readily have
\begin{equation*}
  \|u^\eps(\tau)\|_{L^\infty} \lesssim \frac{\eps}{\<\tau\>^{1/2}}.
\end{equation*}
On the other hand,
\begin{equation*}
  \|U(-\tau)\u_1^\eps(\tau)\|_{\Sigma}=\eps\|v_-\|_{\Sigma},
\end{equation*}
and Sobolev embedding yields, along with \eqref{eq:GNlibre},
\begin{equation*}
  \|\u_1^\eps(\tau)\|_{L^\infty} \lesssim \frac{\eps}{\<\tau\>^{1/2}},
\end{equation*}
as well as
\begin{equation*}
  \|u^\eps(\tau)-\u_1^\eps(\tau)\|_{L^\infty}\lesssim
  \frac{1}{\<\tau\>^{1/2}}\|U(-\tau)\(u^\eps-
  v_1^\eps\)(\tau)\|_{\Sigma} . 
\end{equation*}
Therefore,
\begin{align*}
   \| U(-\tau)\(|u^\eps|^2u^\eps-
    |\u_1^\eps|^2\u_1^\eps\)(\tau)\|_{\Sigma} &\lesssim
    \frac{\eps^2}{\<\tau\>} \|U(-\tau)\(u^\eps-
     \u_1^\eps\)(\tau)\|_{\Sigma}\\
   \lesssim
    \frac{\eps^2}{\<\tau\>} & \(\|U(-\tau)\(u^\eps-
    \u_{\rm app}^\eps\)(\tau)\|_{\Sigma} +
    \|U(-\tau)\u_2^\eps(\tau)\|_{\Sigma}\). 
\end{align*}
We see that
there exists $C$ such that for all $T\ge 1$,
\begin{equation*}
   \|U(-\tau)\u_2^\eps(\tau)\|_{\Sigma}\lesssim \eps^3\log\<T\>,\quad
   \forall \tau \in\left[-\frac{1}{\eps^\gamma},T\right],
 \end{equation*}
 where the logarithmic correction is necessarily present for large
 positive $\tau$, as pointed out above. Gronwall lemma now yields, for
 all $0<\gamma<2$, 
 \begin{equation*}
   \sup_{-1/\eps^\gamma\le t\le T}\left\|   U(-t)\(u^\eps(t)-
     \u_{\rm app}^\eps(t)\)\right\|_{\Sigma} \lesssim 
    \eps^{3+\gamma}|\log \eps| e^{ C \eps^2|\log \eps|+C\eps^2\log\<T\>},
  \end{equation*}
  for some constant $C$ independent of $\eps$ and $T$. In particular,
  the right hand side is $o(\eps^{3})$ on
  $[-1/\eps^\gamma,1/\eps^\beta]$ for any $\beta>0$. 

  The first point of Theorem~\ref{theo:main} follows, by considering
  the above error estimate at time $t=0$ and setting $w_2
  = v_{2\mid t=0}$.

\section{Asymptotic completeness}
\label{sec:complete}

\subsection{Main steps of the construction of $u_+$}

We recall the main steps from the proof of \cite[Theorem~1.2]{HN98}, a
result which we state in the particular case that we shall consider
(the initial data may have a different regularity there, and an extra
short range nonlinearity can be incorporated). In passing, we keep
track of the size of the remainders more precisely.
\begin{theorem}[From \cite{HN98}]\label{theo:complete}
  Let $u_0\in \Sigma$, with $\|u_0\|_{\Sigma}=\eps'\le \delta$, where
  $\delta>0$ is sufficiently small. There exist unique functions $W\in
  L^\infty\cap L^2$ and $\Phi\in L^\infty$ such that, for $t\ge 1$ and $C\delta<\alpha<1/4$, 
  \begin{equation}
    \label{eq:1.4}
     \left\| \F\(U(-t)u(t)\) \exp\(i\lambda\int_1^t|\widehat
     u(\tau)|^2\frac{d\tau}{\tau}\)-W\right\|_{L^2\cap L^\infty} 
     \le C(\eps')^3  t^{-\alpha+C(\eps')^2}, 
  \end{equation}
and
\begin{equation}
    \label{eq:1.5}
      \left\| \lambda \int_1^t |\widehat u(\tau)|^2\frac{d\tau}{\tau}
     -\lambda |W|^2 \log t-\Phi\right\|_{L^\infty} 
     \le C(\eps')^3  t^{-\alpha+C(\eps')^2}.
   \end{equation}
  In particular,
  \begin{equation*}
    u(t,x) =\frac{1}{(it)^{1/2}}W\(\frac{x}{t}\)\exp
    \(i\frac{x^2}{2t}-i\lambda \left|W\(\frac{x}{t}\)\right|^2\log t -
    i\Phi\(\frac{x}{t}\)\) + \rho(t,x),
  \end{equation*}
  with
  \begin{equation*}
    \|\rho(t)\|_{L^2} \le C\eps' t^{-\alpha +C(\eps')^2},\quad
      \|\rho(t)\|_{L^\infty} \le C\eps' t^{-1/2-\alpha +C(\eps')^2} .
  \end{equation*}
\end{theorem}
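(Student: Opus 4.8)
\emph{Strategy.} The statement is \cite[Theorem~1.2]{HN98}, specialized to our setting and with the dependence of all remainders on $\eps'=\|u_0\|_\Sigma$ made quantitative; the plan is to rerun the Hayashi--Naumkin argument while carrying this dependence throughout. Recall that $u\in C(\R,\Sigma)$ exists by the local theory, $\|u(t)\|_{L^2}=\|u_0\|_{L^2}$ is conserved, and, by \eqref{eq:Jcommute}--\eqref{eq:Jder}, $J(t)u$ solves the \emph{linear} equation $\(i\d_t+\tfrac12\d_x^2\)(J(t)u)=\lambda\(2|u|^2J(t)u-u^2\overline{J(t)u}\)$, with no loss of derivative.

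\emph{Reduction to a transport equation.} Introduce the profile $\hat v(t):=\F\(U(-t)u(t)\)$, for which $|\hat v(t,\xi)|=|\hat u(t,\xi)|$, $\|\hat v(t)\|_{L^2}=\|u(t)\|_{L^2}$, and $\|\d_\xi\hat v(t)\|_{L^2}=\|J(t)u(t)\|_{L^2}$ by Lemma~\ref{lem:J}; moreover $u(t)=M(t)D(t)\hat v(t)+\cR(t)\hat v(t)$ with $\cR$ as in Lemma~\ref{lem:R}, so $\|u(t)\|_{L^\infty}=|t|^{-1/2}\|\hat v(t)\|_{L^\infty}$ up to the lower order term $\|\cR(t)\hat v(t)\|_{L^\infty}$. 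Differentiating $\hat v$ gives $i\d_t\hat v=\lambda\F U(-t)\(|u|^2u\)$; inserting $u=MD\hat v+\cR\hat v$, using $\lambda|MD\hat v|^2MD\hat v=\tfrac{\lambda}{t}MD\(|\hat v|^2\hat v\)$ and the identity $\F U(-t)MD=\F M(-t)\F^{-1}=I+\(U(-1/t)-1\)$, one obtains
\[
  i\d_t\hat v=\frac{\lambda}{t}|\hat v|^2\hat v+\mathcal N(t),
\]
where $\mathcal N$ collects $\tfrac{\lambda}{t}\(U(-1/t)-1\)\(|\hat v|^2\hat v\)$ and all terms in $\F U(-t)(|u|^2u)$ carrying a factor $\cR(t)\hat v$. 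Estimating $U(-1/t)-1$ in Fourier variables exactly as $M(t)-1$ is estimated in the proof of Lemma~\ref{lem:R} (the $\sin$ bound \eqref{eq:sinus}), and using that $H^s$ is an algebra for $s>1/2$ together with Lemma~\ref{lem:R} for the $\cR$ terms, one finds, for some $\kappa>0$,
\[
  \|\mathcal N(t)\|_{L^2\cap L^\infty}\lesssim|t|^{-1-\kappa}\,Q\(\|u(t)\|_{L^\infty},\|u(t)\|_{L^2},\|J(t)u(t)\|_{L^2}\),
\]
with $Q$ a polynomial whose lowest order part is cubic; here a positive power of $1/|t|$ is traded against Sobolev regularity of $\hat v$, available by interpolating $\|\hat v\|_{L^2}$ with $\|\d_\xi\hat v\|_{L^2}=\|J(t)u\|_{L^2}$.

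\emph{A priori bounds, phase extraction, conclusion.} A continuity argument on $[1,\infty)$ closes the estimates $\|J(t)u(t)\|_{L^2}\le\eps'\,t^{C(\eps')^2}$ and $\|u(t)\|_{L^\infty}\le\eps'\,t^{-1/2+C(\eps')^2}$ simultaneously --- the first from the energy identity for $J(t)u$ and Gronwall's lemma, the second from \eqref{eq:GNlibre} and $\|u(t)\|_{L^2}=\|u_0\|_{L^2}$ --- and consequently $\|\mathcal N(t)\|_{L^2\cap L^\infty}\lesssim(\eps')^3\,t^{-1-\alpha+C(\eps')^2}$ once $\kappa$, $\delta$ and the bootstrap constants are fixed. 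The resonant term is a pure real gauge, $\d_t|\hat v|^2=2\IM\(\overline{\hat v}\,\mathcal N\)$; setting $B(t,\xi):=\lambda\int_1^t|\hat u(\tau,\xi)|^2\tfrac{d\tau}{\tau}=\lambda\int_1^t|\hat v(\tau,\xi)|^2\tfrac{d\tau}{\tau}$ we get $\d_t\(e^{iB(t)}\hat v(t)\)=-i\,e^{iB(t)}\mathcal N(t)$, so $e^{iB(t)}\hat v(t)$ is Cauchy in $L^2\cap L^\infty$, converges to some $W\in L^2\cap L^\infty$, and integrating from $t$ to $\infty$ yields \eqref{eq:1.4} since $\F\(U(-t)u(t)\)=\hat v(t)$. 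Next $\bigl|\,|\hat v(\tau)|^2-|W|^2\,\bigr|\le|e^{iB(\tau)}\hat v(\tau)-W|\(|\hat v(\tau)|+|W|\)\lesssim(\eps')^4\,\tau^{-\alpha+C(\eps')^2}$, so $B(t)-\lambda|W|^2\log t=\lambda\int_1^t\(|\hat v(\tau)|^2-|W|^2\)\tfrac{d\tau}{\tau}$ converges in $L^\infty$ to some $\Phi$, with the tail bound \eqref{eq:1.5}. Finally $\hat v(t)=W\,e^{-i\lambda|W|^2\log t-i\Phi}+\O_{L^2\cap L^\infty}\((\eps')^3\,t^{-\alpha+C(\eps')^2}\)$; substituting into $u(t)=M(t)D(t)\hat v(t)+\cR(t)\hat v(t)$ produces the pointwise formula, the error $\rho$ being the $M(t)D(t)$-image of the $L^2\cap L^\infty$ remainder (its $L^2$ size is preserved, its $L^\infty$ size gains an extra $|t|^{-1/2}$ from $\|D(t)f\|_{L^\infty}=|t|^{-1/2}\|f\|_{L^\infty}$) plus $\cR(t)\hat v(t)$, controlled by Lemma~\ref{lem:R}; this gives precisely the stated bounds on $\|\rho(t)\|_{L^2}$ and $\|\rho(t)\|_{L^\infty}$.

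\emph{Main difficulty.} The cubic power being exactly borderline, $\mathcal N$ decays only by a small margin $|t|^{-\kappa}$ over the non-integrable resonant term, and that margin can be bought only through Lemma~\ref{lem:R}-type estimates, at the cost of Sobolev regularity of $\hat v$, i.e.\ of $\|J(t)u(t)\|_{L^2}$ --- a norm which itself grows like $t^{C(\eps')^2}$ precisely because of the resonant coefficient. Worse, the $L^\infty$ component of \eqref{eq:1.4} forces one to place $|\hat v|^2\hat v$ in $H^s$ with $s>1/2$ while only $H^1$ is available, so the constraint $s+2\theta\le1$ caps the usable decay power at $\theta<1/4$: this is the origin of the window $C\delta<\alpha<1/4$. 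Making these two effects cohere is the heart of the matter, and is also why the logarithmically divergent phase $\lambda|W|^2\log t$ must be renormalized away before any limit is taken. The only genuinely new point relative to \cite{HN98} is bookkeeping: checking that every constant above is the stated power of $\eps'=\|u_0\|_\Sigma$ rather than merely ``small''.
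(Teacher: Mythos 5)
Your proposal is correct and follows essentially the same route as the paper's sketch: pass to the profile $\widehat v=\F(U(-t)u)$, gauge away the resonant cubic term with the phase $B(t)$, control the nonresonant remainder by trading powers of $M(\pm t)-1$ against the $\Sigma$-regularity supplied by the a priori bounds on $J(t)u$ (which the paper simply quotes from \cite{HN98} rather than re-running the bootstrap), and reconstruct the pointwise asymptotics via the $M(t)D(t)\F$ factorization and Lemma~\ref{lem:R}. Your extraction of $\Phi$ directly from the convergence of $\lambda\int_1^t(|\widehat v(\tau)|^2-|W|^2)\frac{d\tau}{\tau}$ is a slight streamlining of the paper's argument via the auxiliary function $\Psi$, and your accounting for the $\alpha<1/4$ window matches the paper's use of $s+2\theta\le 1$ with $s>1/2$.
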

In \cite{HN98}, the time decay in \eqref{eq:1.5}  is raised to the
power $2/3$, because the setting is more general and includes the case
of dimension three. We explain below why this power can be discarded
in the above statement. The asymptotic state $u_+$ is naturally given
by
\begin{equation*}
  u_+ = \F^{-1} \(We^{-i\Phi}\).
\end{equation*}
\begin{proof}[Sketch of the proof]
  As announced above, we recall the main steps from \cite{HN98}, and
  slightly improve some estimates in terms of the powers of
  $\eps'$.

As evoked before in the case of \eqref{eq:unifLinfty}, (the proof of) \cite[Theorem~1.1]{HN98} provides global estimates
for $u$ (proven in \cite[Lemma~3.2 and 3.3]{HN98}, see also \cite{KatoPusateri2011}):
\begin{equation}\label{eq:unif-u}
  \|u(t)\|_{L^\infty}\lesssim
    \frac{\eps'}{\<t\>^{1/2}},\quad
   \|U(-t)u(t)\|_{\Sigma}\lesssim \eps' \<t\>^{C(\eps')^2}.
\end{equation}
Denote
  \begin{equation*}
    v(t)=U(-t)u(t),\quad \widehat w = \widehat v \underbrace{\exp\(i\lambda
        \int_1^t |\widehat v(\tau)|^2\frac{d\tau}{\tau}\)}_{=:B(t)}.
    \end{equation*}
 Then \eqref{eq:NLS} is recasted as
 \begin{equation}\label{eq:w-complete}
   i\d_t \widehat w =\frac{\lambda}{t}B(t)\(I_1+I_2\),
 \end{equation}
 where
 \begin{equation*}
   I_1(t)= \F\(M(-t)-1\)\F^{-1}\(|\widehat{M(t) v}|^2\widehat{M(t) v}\),\quad
  I_2(t)= \left|\widehat{M(t)v}\right|^2 \widehat{M(t)v}- |\widehat
  v|^2\widehat v.
\end{equation*}
These source terms are controlled as follows, for $t\ge 1$:
\begin{equation*}
  \|I_1(t)\|_{L^2\cap L^\infty} + \|I_2(t)\|_{L^2\cap L^\infty} \lesssim \frac{\|v\|_{\Sigma}^3}{t^\alpha},
\end{equation*}
provided that $\alpha<1/2$. In view of \eqref{eq:unif-u} and the
definition of $v$, this entails
\begin{equation*}
   \|I_1(t)\|_{L^2\cap L^\infty} + \|I_2(t)\|_{L^2\cap L^\infty}
   \lesssim (\eps')^3 t^{-\alpha+3C(\eps')^2}. 
 \end{equation*}
Renaming $3C$ to $C$, and integrating \eqref{eq:w-complete}, we
readily find that there exists 
$W\in L^2\cap L^\infty$ such that
\begin{equation}\label{eq:cvw}
  \|\widehat w(t)-W\|_{L^2\cap L^\infty} \lesssim (\eps')^3 t^{-\alpha+C(\eps')^2},
\end{equation}
thus proving \eqref{eq:1.4}, since
\begin{equation*}
  |\widehat v(t,\xi) | = \left|\F\(U(-t)u(t)\)(\xi)\right|=
    \left|e^{-it\frac{\xi^2}{2}} \widehat u(t,\xi)\right|=|\widehat
    u(t,\xi)|.
\end{equation*}
The phase corrector $\Phi$ is obtained by introducing the function
\begin{equation}\label{eq:defPsi}
  \Psi (t) =\lambda\int_1^t \(|\widehat w(\tau)|^2-|\widehat
  w(t)|^2\)\frac{d\tau}{\tau}. 
\end{equation}
We have, for $t>s>2$,
\begin{equation*}
  \Psi(t)-\Psi(s) = \lambda\int_s^t \(|\widehat w(\tau)|^2-|\widehat
  w(t)|^2\)\frac{d\tau}{\tau}+\lambda \( |\widehat w(t)|^2-|\widehat
  w(s)|^2\)\log s.
\end{equation*}
In view of the above estimates, for $t_2>t_1>2$,
\begin{align*}
  \left| |\widehat w(t_2)|^2-|\widehat
  w(t_1)|^2\right|&\lesssim \( |\widehat w(t_2)|+|\widehat
  w(t_1)|\) |\widehat w(t_2)-\widehat
                    w(t_1)|\\
  &\lesssim \( |\widehat w(t_2)|+|\widehat
  w(t_1)|\)(\eps')^3 t_1^{-\alpha+C(\eps')^2}.
\end{align*}
On the other hand, we have
\begin{align*}
  |\widehat w(t)|\lesssim \eps',
\end{align*}
from \eqref{eq:cvw}. Therefore,
\begin{equation}\label{eq:diff-hat-w}
  \left| |\widehat w(t_2)|^2-|\widehat
  w(t_1)|^2\right|
  \lesssim (\eps')^4 t_1^{-\alpha+C(\eps')^2},\quad t_2>t_1>2,
\end{equation}
and so
\begin{equation*}
  |\Psi(t)-\Psi(s) |\lesssim (\eps')^4 s^{-\alpha+C(\eps')^2}\log s.
\end{equation*}
In particular, there exists $\Phi\in L^\infty$ such that, for $t>2$,
\begin{equation}\label{eq:cvPhi}
  |\Phi-\Psi(t) |\lesssim (\eps')^4 t^{-\alpha+C(\eps')^2}\log t.
\end{equation}
The estimate \eqref{eq:1.5} then follows from the identity
\begin{equation*}
  \lambda \int_1^t |\widehat w(\tau)|^2\frac{d\tau}{\tau} =
  \lambda|W|^2\log t + \Phi + \Psi(t)-\Phi+ \lambda\( |\widehat
  w(t)|^2-|W|^2\)\log t,
\end{equation*}
using \eqref{eq:cvw} and \eqref{eq:cvPhi}, possibly modifying the
constant $C$, and using the fact that the condition on $\alpha$ in
Theorem~\ref{theo:complete} is open. 

In view of \eqref{eq:1.4} and \eqref{eq:1.5}, we find
\begin{equation*}
  \F\(U(-t)u(t)\) = W e^{-i\lambda |W|^2\log t -i\Phi} +r(t),
\end{equation*}
where
\begin{equation*}
  \|r(t)\|_{L^2\cap
    L^\infty} \lesssim (\eps')^3 t^{-\alpha +C(\eps')^2}. 
\end{equation*}
Writing 
\begin{equation*}
  1=U(t)U(-t)= M(t)D(t)\F M(t) U(-t) = M(t)D(t)\F  U(-t) + \cR(t) \F
  U(-t) ,
\end{equation*}
we infer
\begin{equation*}
  u(t,x) =M(t)D(t)\F  U(-t) u(t)+\rho_1(t,x),
\end{equation*}
with, in view of Lemma~\ref{lem:R},
\begin{equation*}
  \|\rho_1(t) \|_{L^2} \lesssim \frac{\eps'}{\sqrt t}\|\F
  U(-t) u(t)\|_{L^2} \lesssim \frac{\eps'}{\sqrt t},
\end{equation*}
and, since $\alpha<1/4$, using the first point of Lemma~\ref{lem:R}
with $\theta=\alpha$ and $s=1-2\alpha$, 
\begin{equation*} 
\|\rho_1(t) \|_{L^\infty} \lesssim \frac{\eps'}{t^{1/2+\alpha}}\|\F
  U(-t) u(t)\|_{H^1}\lesssim  \eps' t^{-1/2-\alpha+C(\eps')^2}.
\end{equation*}
We conclude thanks to the estimates
\begin{align*}
  &\|M(t)D(t)r(t)\|_{L^2}=\|r(t)\|_{L^2} \lesssim (\eps')^3 t^{-\alpha
    +C(\eps')^2},\\
&\|M(t)D(t)r(t)\|_{L^\infty}=\frac{1}{\sqrt
    t}\|r(t)\|_{L^\infty}\lesssim (\eps')^3 t^{-1/2-\alpha
    +C(\eps')^2},
\end{align*}
since $u(t)= M(t)D(t) \( W e^{-i\lambda |W|^2\log t -i\Phi}\) +
M(t)D(t)r(t)+\rho_1(t)$. 
\end{proof}

\subsection{Modified scattering operator: leading order
  behavior near the origin}\label{sec:complete1}

In view of Theorem~\ref{theo:complete}, the asymptotic state that we
consider is $u_+^\eps= \F^{-1}\(W^\eps e^{-i\Phi^\eps}\)$, with
$u_0^\eps = u^\eps_{\mid t=0}$. The main remark at this stage is that
the reduction presented in the proof of Theorem~\ref{theo:complete}
boils down the analysis of the asymptotic behavior of $u_+^\eps$ as
$\eps\to 0$ to a regular asymptotic expansion. In order to treat the
last two cases of Theorem~\ref{theo:main}, we suppose that, for $0<\eta<2$,
\begin{equation*}
  u_0^\eps = \eps v_0 + \eps^3 w_2 + \O\(\eps^{5-\eta} \),
\end{equation*}
with $v_0,w_2\in \Sigma$ and the remainder term is (measured) in
$\Sigma$. For the last point of Theorem~\ref{theo:main}, we assume 
$w_2=\rho^\eps=0$, while for the second point, $v_0=v_-$ and $w_2 =
v_{2\mid t=0}$ as in the first point.

We keep the same notations as the proof of
Theorem~\ref{theo:complete}, with now $\eps$ instead  of $\eps'$. The
definition of $\widehat w$, in particular the term $B$, requires as a
first step the asymptotic description of $u^\eps$ at time $t=1$ instead
of only $t=0$.

In view of \cite[Theorem~1.1]{HN98}, as in \eqref{eq:unif-u}, we have
\begin{equation*}
   \|u^\eps(t)\|_{L^\infty}\lesssim
    \frac{\eps}{\<t\>^{1/2}},\quad
   \|U(-t)u^\eps(t)\|_{\Sigma}\lesssim \eps \<t\>^{C\eps^2}.
 \end{equation*}
 In view of the proof of \eqref{eq:1.4} and \eqref{eq:1.5}, we
 directly know that
 \begin{equation*}
   \|\d_t \widehat w\|_{L^1([1,\infty[,L^2\cap L^\infty)}\lesssim \eps^3,
 \end{equation*}
 and so
 \begin{equation*}
   \widehat w(t) = \widehat w(1) + \O(\eps^3) \text{ in }L^\infty([1,\infty[,L^2\cap L^\infty).
 \end{equation*}
 By definition, $\widehat w(1) = \widehat v(1) = \F\( U(-1)u^\eps(1)\)= \eps
 \widehat v_0+\O(\eps^3)$ in $\Sigma$.

 Regarding $\Phi$, the definition~\eqref{eq:defPsi}, and \eqref{eq:diff-hat-w},
 yield
 \begin{equation*}
   \|\Phi\|_{L^\infty}\le
   \|\Phi-\Psi(t)\|_{L^\infty}+\|\Psi(t)\|_{L^\infty}\lesssim \eps^4 ,
 \end{equation*}
where the right hand side is estimated uniformly in $t\ge 1$. 
 Therefore,
 \begin{equation*}
   u_+^\eps = \F^{-1}\(We^{-i\Phi}\) = \eps v_0+\O(\eps^3)\text{ in
   }L^2. 
 \end{equation*}
 Therefore, at leading order, we have
 \begin{equation*}
   \Scatt(\eps v_-) = \eps v_-+\O(\eps^3),\quad \(W_+^{\rm mod}\)^{-1}
   (\eps v_0) = \eps v_ 0 +\O(\eps^3)\quad \text{in }L^2.
 \end{equation*}

\section{Higher order asymptotic expansion of the final state}
\label{sec:complete-higher}

The higher order asymptotic expansion for $u_+^\eps$, involving an
 $\eps^3$ term, requires more work, even at the formal level. We first
 show how to derive this term, then, in a final subsection, prove the
 error estimates announced in Theorem~\ref{theo:main}.
 
 \subsection{Derivation of the first corrector}

We first examine the value of $u^\eps$ at time $t=1$, in view of
future connections with 
Theorem~\ref{theo:complete}.
The idea is the same as in Section~\ref{sec:next1}: for finite time,
since we consider small data, the asymptotic expansion is given by
Picard's scheme, and we have directly from Section~\ref{sec:wave-zero},
\begin{equation*}
  u^\eps_{\mid t=1} = \eps U(1)v_0 + \eps^3 \widetilde w_2  + \eps^{5-\eta} \widehat\rho^\eps,
\end{equation*}
with
\begin{equation*}
  \widetilde w_2 = U(1)w_2 -i\lambda\int_0^1 U(1-s)\(|U(s)v_0|^2U(s)v_0\)ds,
\end{equation*}
and $\|\rho^\eps\|_{\Sigma}\lesssim 1$ as $\eps \to 0$. 
\smallbreak

To derive the first corrector, we plug the asymptotic expansion 
\begin{equation*}
  \widehat w(t) = \eps \widehat v_0 + \eps^3 \widehat\nu_2 (t)
  +\eps^{5-\eta}\widehat{r^\eps}(t)
\end{equation*}
into \eqref{eq:w-complete}, with $r^\eps=\O(1)$ in some topology we
shall precise. We first proceed formally, and then check
that the above ansatz indeed provides a corrector of $u^\eps_+$ in
$L^2$. We compute successively
\begin{equation*}
  |\widehat v|^2=|\widehat w|^2 = \eps^2|\widehat v_0|^2 +\O(\eps^4),
\end{equation*}
hence, for bounded $t$,
\begin{equation*}
  B(t) = e^{i\lambda \int_1^t |\widehat v(\tau)|^2\frac{d\tau}{\tau}} = 1+
  i\lambda \eps^2|\widehat v_0|^2\log t + \O(\eps^4),
\end{equation*}
and
\begin{equation*}
  \widehat v = \widehat w \overline B = \eps \widehat v_0 +\eps^3\( \widehat\nu_2
  -i\lambda |\widehat v_0 |^2\widehat v_0 \log t\)+\O(\eps^{5-\eta}).
\end{equation*}
We then expand the factor $I_1^\eps$ and $I_2^\eps$ from \eqref{eq:w-complete}:
\begin{align*}
  I_1^\eps(t)& = \eps^3
 \F\(M(-t)-1\)\F^{-1}\(\left|\widehat{M(t)v_0}\right|^2 
       \widehat{M(t)v_0} \)+\O(\eps^5),\\
 I_2^\eps(t) & = \eps^3\( \left|\widehat{M(t)v_0}\right|^2
       \widehat{M(t)v_0} - |\widehat v_0 |^2\widehat v_0 \)+\O(\eps^5).
\end{align*}
Therefore, the natural candidate for $\nu_2$ satisfies:
\begin{equation*}
  \d_t \widehat\nu_2 = \frac{\lambda}{t}(J_1+J_2),
\end{equation*}
where
\begin{align*}
   J_1& = \F\(M(-t)-1\)\F^{-1}\(\left|\widehat{M(t)v_0}\right|^2
       \widehat{M(t)v_0} \),\\
 J_2 & = \left|\widehat{M(t)v_0}\right|^2
       \widehat{M(t)v_0} - |\widehat v_0 |^2\widehat v_0 .
\end{align*}
The Cauchy datum for $\nu_2$ is given at $t=1$: on the one hand, the
above asymptotic expansion implies
\begin{equation*}
  \widehat v_{\mid t=1} =\eps \widehat v_0 +\eps^3\widehat{\nu}_{2\mid t=1}
  +\O(\eps^{5-\eta}),
\end{equation*}
while on the other hand, the analysis of Section~\ref{sec:wave-zero} yields
\begin{equation*}
  U(-t)u^\eps(t)_{\mid t=1} = \eps v_0 + \eps^3 U(-1)\widetilde w_2 + \O(\eps^{5-\eta}),
\end{equation*}
so we infer
\begin{equation*}
  \nu_{2\mid t=1}=   U(-1)\widetilde w_2  = w_2
  -i\lambda \int_0^1 U(-s)\( |U(s)v_0|^2U(s)v_0\)ds.
\end{equation*}
As $v_0\in \Sigma$, the proof of Theorem~\ref{theo:complete} readily
yields $\frac{1}{t}(J_1+J_2)\in L^1([1,\infty[;L^2\cap L^\infty)$.
Since $\Sigma$ is a Banach algebra, invariant under the Fourier
transform, and such that $\Sigma\hookrightarrow L^1\cap L^\infty$, we
have $\nu_{2\mid t=1}\in \Sigma$, and so $\nu_2\in
L^\infty([1,\infty[;L^2\cap L^\infty)$ and
\begin{equation*}
  \widehat\nu_2(t)\Tend t \infty \widehat\nu_{2\mid t=1} + \lambda \int_1^\infty
  (J_1(\tau)+J_2(\tau))\frac{d\tau}{\tau} \quad\text{in }L^2\cap
  L^\infty. 
\end{equation*}

However, in the asymptotic expansion of $B$,  $I_1$ and $I_2$,
the more precise information that we need is that the remainder term
is $\O(\eps^5)$ together with some algebraic decay in time, so we
obtain an asymptotic expansion for $\widehat w$ in $L^2$.  This requires
more effort, and the proof is presented in the next subsection.

\subsection{Justification of the second order asymptotic expansion}
As a first step, we analyze the regularity of the  corrector
$\nu_2$. To do so, we introduce an extra function space: for
$0<\gamma<1$, set
  \begin{equation*}
     \Sigma^\gamma = H^\gamma\cap \F(H^\gamma)=\{f\in H^\gamma(\R),\
     \|f\|_{\Sigma^\gamma}:= \|f\|_{H^\gamma} +\| \<x\>^\gamma f\|_{L^2}<\infty\}.
   \end{equation*}
  We note that like $\Sigma$, $\Sigma^\gamma$ is invariant under the
  Fourier transform, as well as the action of $U(t)$, and,  when
  $\gamma>1/2$,  it is an
  algebra embedded into $L^1\cap L^\infty$.
\begin{lemma}\label{lem:nu2}
   Let $v_0,w_2\in \Sigma$. Recall that the first corrector is defined by
  \begin{equation*}
     \widehat\nu_2(t) =  \widehat w_2
  -i\lambda \int_0^1 \F U(-s)\( |U(s)v_0|^2U(s)v_0\)ds +\lambda\int_1^t
                       (J_1(\tau)+J_2(\tau))\frac{d\tau}{\tau} , 
  \end{equation*}
  where $J_1$ and $J_2$ are defined by
  \begin{align*}
   J_1(t)& = \F\(M(-t)-1\)\F^{-1}\(\left|\widehat{M(t)v_0}\right|^2
       \widehat{M(t)v_0} \),\\
 J_2(t) & = \left|\widehat{M(t)v_0}\right|^2
       \widehat{M(t)v_0} - |\widehat v_0 |^2\widehat v_0 .
  \end{align*}
  Then $J_1,J_2\in L^\infty([1,\infty[,\Sigma)$, with 
  \begin{equation*}
    \sup_{t\ge 1}\|J_1(t)\|_\Sigma +  \sup_{t\ge
      1}\|J_2(t)\|_\Sigma\lesssim \|v_0\|_{\Sigma}^3,
  \end{equation*}
 and, for any $1/2<\gamma<1$,
  \begin{equation*}
    \|J_1(t)\|_{\Sigma^\gamma}+ \|J_2(t)\|_{\Sigma^\gamma}\lesssim
      \frac{\|v_0\|_{\Sigma}^3}{t^{(1-\gamma)/2}} ,
  \end{equation*}
 and therefore  $\nu_2\in
 L^\infty_{\rm loc}([1,\infty[,\Sigma)\cap
 L^\infty([1,\infty[,\Sigma^\gamma)$. Finally, $\widehat \nu_2(t)\to
 \widehat\nu_2^\infty$ in $\Sigma^\gamma$ as $t\to \infty$, where
 \begin{equation*}
   \widehat\nu_2^\infty = \widehat w_2
  -i\lambda \int_0^1 \F U(-s)\( |U(s)v_0|^2U(s)v_0\)ds +\lambda\int_1^\infty
                       (J_1(\tau)+J_2(\tau))\frac{d\tau}{\tau} .
 \end{equation*}
\end{lemma}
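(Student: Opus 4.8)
The strategy is to estimate $J_1$ and $J_2$ pointwise in $t$ in the two scales of spaces $\Sigma$ and $\Sigma^\gamma$, using the factorization techniques already deployed in the proof of Lemma~\ref{lem:R}, and then to assemble the conclusions about $\nu_2$ by integrating in time. First I would record that $M(t)v_0\in\Sigma$ for each $t\ge1$, with $\|M(t)v_0\|_\Sigma\lesssim\|v_0\|_\Sigma$ uniformly in $t\ge1$: indeed $\|M(t)v_0\|_{L^2}=\|v_0\|_{L^2}$, $\|xM(t)v_0\|_{L^2}=\|xv_0\|_{L^2}$, and $\|\d_x(M(t)v_0)\|_{L^2}\le\|\d_xv_0\|_{L^2}+\frac{1}{|t|}\|xv_0\|_{L^2}$. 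Since $\Sigma$ is a Banach algebra invariant under $\F$, the cubic expression $|\widehat{M(t)v_0}|^2\widehat{M(t)v_0}=\F\bigl(|\cdot|^2\cdot\bigr)$-type object lies in $\Sigma$ with norm $\lesssim\|v_0\|_\Sigma^3$, uniformly in $t\ge1$. The same holds for $|\widehat v_0|^2\widehat v_0$. This immediately gives $\|J_2(t)\|_\Sigma\lesssim\|v_0\|_\Sigma^3$, and $\|J_1(t)\|_\Sigma\lesssim\|v_0\|_\Sigma^3$ follows because $\F(M(-t)-1)\F^{-1}=U(1/t)-1$ is bounded on $\Sigma$ uniformly for $|t|\ge1$ (composition of the unitary $U(1/t)$ with the identity); one could alternatively observe $\F(M(-t)-1)\F^{-1}=M$-type convolution and use the algebra structure directly.

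For the decaying $\Sigma^\gamma$ estimates, the key is that $J_1$ and $J_2$ each carry a factor measuring $M(\pm t)-1$, which is small by \eqref{eq:sinus}. For $J_2$, write $J_2=G(\widehat{M(t)v_0})-G(\widehat v_0)$ with $G(z)=|z|^2z$; since $G$ is Lipschitz on bounded sets of the algebra $\Sigma^\gamma$ (for $\gamma>1/2$), $\|J_2(t)\|_{\Sigma^\gamma}\lesssim\|v_0\|_\Sigma^2\,\|(M(t)-1)v_0\|_{\Sigma^\gamma}$. The factor $(M(t)-1)v_0$ is then estimated exactly as in Lemma~\ref{lem:R}: in $L^2$ by $|t|^{-\theta}\||x|^{2\theta}v_0\|_{L^2}$, and the weighted and derivative components of the $\Sigma^\gamma$ norm by paying momentum from $v_0\in\Sigma$; choosing $\theta$ so that the residual regularity matches $\gamma$ (namely $\theta=(1-\gamma)/2$, which is $\le1/2$ and costs $2\theta=1-\gamma$ derivatives of $v_0$, available since $v_0\in H^1$) yields the claimed rate $t^{-(1-\gamma)/2}$. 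For $J_1$, use $\F(M(-t)-1)\F^{-1}=U(1/t)-1$ and the Lemma~\ref{lem:R}-type bound: $\|(U(1/t)-1)f\|_{\Sigma^\gamma}\lesssim|t|^{-(1-\gamma)/2}\|f\|_{\Sigma}$, applied to $f=|\widehat{M(t)v_0}|^2\widehat{M(t)v_0}$ whose $\Sigma$-norm is $\lesssim\|v_0\|_\Sigma^3$ uniformly; this again gives $t^{-(1-\gamma)/2}$.

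Once these pointwise bounds are in hand, the statements about $\nu_2$ are bookkeeping. The uniform $\Sigma$-bounds on $J_1,J_2$ together with the integration $\int_1^t(J_1+J_2)\frac{d\tau}{\tau}$ show $\nu_2\in L^\infty_{\rm loc}([1,\infty[,\Sigma)$ (the $\log t$ growth is absorbed into ``$\rm loc$''), and since $w_2\in\Sigma$ and $\int_0^1\F U(-s)(|U(s)v_0|^2U(s)v_0)\,ds\in\Sigma$ (integrand continuous with values in the algebra $\Sigma$, $U(s)v_0\in\Sigma$ uniformly for $s\in[0,1]$), the Cauchy datum $\widehat\nu_{2\mid t=1}$ lies in $\Sigma$. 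The decaying $\Sigma^\gamma$-bounds give $\int_1^\infty\|J_1(\tau)+J_2(\tau)\|_{\Sigma^\gamma}\frac{d\tau}{\tau}\lesssim\|v_0\|_\Sigma^3\int_1^\infty\tau^{-1-(1-\gamma)/2}\,d\tau<\infty$, so the time integral converges absolutely in $\Sigma^\gamma$; combined with $\widehat\nu_{2\mid t=1}\in\Sigma\hookrightarrow\Sigma^\gamma$ this gives $\nu_2\in L^\infty([1,\infty[,\Sigma^\gamma)$ and the convergence $\widehat\nu_2(t)\to\widehat\nu_2^\infty$ in $\Sigma^\gamma$, with $\widehat\nu_2^\infty$ the stated expression.

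\textbf{Main obstacle.} The delicate point is the $\Sigma^\gamma$ estimate on $J_1$ and $J_2$ with the sharp rate $t^{-(1-\gamma)/2}$: one must track how the smallness of $M(\pm t)-1$ from \eqref{eq:sinus} interacts with the loss of one weight when estimating the spatial-weight part of the $\Sigma^\gamma$ norm (via the commutator of $x$ with $M$) and with the chain rule for the cubic nonlinearity in the Sobolev scale. This forces a careful interpolation-type choice of the exponent $\theta$ and uses the full strength of $v_0\in\Sigma=H^1\cap\F(H^1)$ (exactly one derivative and one weight to spend), which is why the rate degenerates as $\gamma\to1$. The remaining verifications — uniform-in-$t$ algebra bounds for $M(t)v_0$, continuity of the short-time integrand, absolute convergence of the long-time integral — are routine.
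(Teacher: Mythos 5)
Your proposal follows essentially the same route as the paper's proof: uniform bounds in $\Sigma$ via the algebra property and the invariance of $\Sigma$ under $\F$ and multiplication by $M(t)$, decay in $\Sigma^\gamma$ by trading the weight $|x|^{1-\gamma}$ against $t^{-(1-\gamma)/2}$ through the bound $|M(\pm t)-1|\lesssim|x^2/t|^\theta$ with $\theta=(1-\gamma)/2$ (for $J_1$ after conjugating by $\F$, for $J_2$ via the Lipschitz/algebra structure of $\Sigma^\gamma$ applied to $(M(t)-1)v_0$), and then absolute convergence of $\int_1^\infty(J_1+J_2)\frac{d\tau}{\tau}$ in $\Sigma^\gamma$. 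The only minor imprecision is your phrase about spending ``$1-\gamma$ derivatives'' of $v_0$ — the cost is a spatial weight, and the $H^\gamma$ component of the norm requires the interpolation you allude to in your final paragraph — but this is exactly the point the paper also treats tersely, and your argument is correct.
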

\begin{proof}[Proof of Lemma~\ref{lem:nu2}]
  Since $w_2,v_0\in \Sigma$, the first two terms defining $\widehat
  \nu_2$ belong to $\Sigma$ (as it is an algebra). For any (fixed)
  $t$, $J_1,J_2\in \Sigma$, and $\nu_2(t)\in \Sigma$: the uniform
  bound in time is straightforward, but to get some time decay, we pay
  a little regularity. Let $0<\gamma<1$: 
  like in the proof of Lemma~\ref{lem:R}, write, for $t\ge 1$,
  \begin{align*}
    \|J_1(t)\|_{\Sigma^\gamma} &= \left\|
    \(M(-t)-1\)\F^{-1}\(\left|\widehat{M(t)v_0}\right|^2 
                              \widehat{M(t)v_0} \)\right\|_{\Sigma^\gamma}\\
                            &\lesssim
    \frac{1}{t^{(1-\gamma)/2}}
    \left\|
    |x|^{1-\gamma}\F^{-1}\(\left|\widehat{M(t)v_0}\right|^2 
     \widehat{M(t)v_0} \)\right\|_{\Sigma^\gamma} \\
    &\lesssim \frac{1}{t^{(1-\gamma)/2}} \left\|
    \F^{-1}\(\left|\widehat{M(t)v_0}\right|^2 
     \widehat{M(t)v_0} \)\right\|_{\Sigma}\lesssim
      \frac{\|v_0\|_{\Sigma}^3}{t^{(1-\gamma)/2}} , 
  \end{align*}
  since $\Sigma$ is an algebra. The assumption
  $\gamma>1/2$ simplifies the computations in the case of $J_2$, as
  $\Sigma^\gamma$ is an algebra, and the first 
  inequality below is straightforward: 
  \begin{align*}
     \|J_2(t)\|_{\Sigma^\gamma} &\lesssim \(\|M(t)v_0\|_{\Sigma^\gamma}^2 +
                               \|v_0\|_{\Sigma^\gamma}^2\)
                               \|(M(t)-1)v_0\|_{\Sigma^\gamma}\\
    &\lesssim
    \|v_0\|_{\Sigma}^2\frac{1}{t^{(1-\gamma)/2}}\||x|^{1-\gamma}
    v_0\|_{\Sigma^\gamma} \lesssim
    \frac{\|v_0\|_{\Sigma}^3}{t^{(1-\gamma)/2}} . 
  \end{align*}
  The lemma follows, since the extra decay in time,
  $t^{(\gamma-1)/2}$, ensures the convergence of the integral in the
  last term defining $\widehat \nu_2$. 
\end{proof}
We recall two uniform estimates which will be of constant use in the
course of the proof:
\begin{equation}\label{eq:control-hat-v}
  \|\widehat v(t)\|_{L^\infty}+
  \left\|\widehat{M(t)v}\right\|_{L^\infty}\lesssim \eps.
\end{equation}
The first quantity is estimated in the proof of
Theorem~\ref{theo:complete}, thanks  to \eqref{eq:cvw}. The second one
is controlled thanks to \eqref{eq:unif-u}, since, using
\eqref{eq:U(-t)}, 
\begin{equation*}
  \left\|\widehat{M(t)v}\right\|_{L^\infty}= \left\|\F
 M(t)U(-t)u(t)\right\|_{L^\infty}
 =\left\|D\(\frac{1}{t}\)M(-t)u(t)\right\|_{L^\infty}
= \sqrt t  \left\|u(t)\right\|_{L^\infty}.
\end{equation*}

The justification of the formal asymptotic expansion derived in the
previous subsection relies on a Gronwall type argument. The error that
we want to control is $\widehat w -\eps\widehat v_0 -\eps^3\widehat
\nu_2$, which solves, by construction (keeping in mind that $v_0$ does
not depend on time),
\begin{equation*}
  \d_t\( \widehat w -\eps\widehat v_0 -\eps^3\widehat \nu_2\)=
  \frac{\lambda}{t}B(I_1^\eps+I_2^\eps) -\frac{\lambda}{t}\eps^3
  (J_1+J_2).  
\end{equation*}
We rewrite the right hand side as
\begin{equation*}
  \frac{\lambda}{t}B\(I_1^\eps+I_2^\eps-\eps^3J_1-\eps^3J_2\)
  +\frac{\lambda}{t}(B-1)\eps^3 
  (J_1+J_2), 
\end{equation*}
and the last term will be considered as a source term. Indeed, from
the definition
\begin{equation*}
  B(t) = e^{i\lambda \int_1^t |\widehat v(\tau)|^2\frac{d\tau}{\tau}}=
  e^{i\lambda \int_1^t |\widehat w(\tau)|^2\frac{d\tau}{\tau}} ,
\end{equation*} 
so from \eqref{eq:control-hat-v}, 
\begin{equation}
  \label{eq:B-1}
  \|B(t) -1\|_{L^\infty}\lesssim \eps^2\log t,
\end{equation}
and Lemma~\ref{lem:nu2} yields
\begin{equation}\label{eq:source-nu2-1}
  \left\|\eps^3(B-1)
  (J_1+J_2)\right\|_{L^2}\lesssim \eps^5 
\frac{\log t }{t^{(1-\gamma)/2}},
\end{equation}
for any  $1/2<\gamma<1$.

We now focus on the term $B(I_1^\eps+I_2^\eps-\eps^3J_1-\eps^3J_2)$,
to estimate it in $L^2$,
and examine successively $B(I_1^\eps-\eps^3J_1)$
and $B(I_2^\eps-\eps^3J_2)$. First, by definition,
\begin{equation*}
  I_1^\eps-\eps^3J_1 = \F\(M(-t)-1\)\F^{-1}\(
\left|\widehat{M(t)v}\right|^2
       \widehat{M(t)v}
  -\eps^3\left|\widehat{M(t)v_0}\right|^2
       \widehat{M(t)v_0} \),
\end{equation*}
so the term $v-\eps v_0$ is naturally factored out, and we must relate
it to the left hand side, involving $w-\eps v_0-\eps^3\nu_2$. Since
$B\overline B=1$, we can write
\begin{equation*}
 \widehat \rho := \widehat w -\eps \widehat v_0-\eps^3\widehat \nu_2 = B\widehat v
  -\eps \widehat v_0-\eps^3\widehat \nu_2  = B\(\widehat v
  -\eps \overline B \widehat v_0-\eps^3\overline B\widehat \nu_2\),
\end{equation*}
so we have
\begin{equation}
  \label{eq:recoll-DA}
  \widehat v-\eps\widehat v_0 = \overline B \( \widehat w -\eps
  \widehat v_0-\eps^3\widehat \nu_2\) +  \eps (\overline B -1)\widehat
  v_0 +\eps^3 \overline B \widehat \nu_2.
\end{equation}
The first term on the right hand side will be treated differently from
the last two terms, which will be considered as source terms. Using
the formula
\begin{equation*}
  |z_2|^2z_2 - |z_1|^2z_1 = |z_2|^2(z_2-z_1) +z_1\RE
  (z_2-z_1)(\overline z_2+\overline z_1),
\end{equation*}
with $z_2 = \F(Mv)$ and $z_1=\eps\F(Mv_0)$, as well as
\eqref{eq:recoll-DA}, yielding
\begin{equation}\label{eq:redress-DA}
  \F\(M(v-\eps v_0)\) = \F M \F^{-1}\(  \overline B \( \widehat w -\eps
  \widehat v_0-\eps^3\widehat \nu_2\) +  \eps (\overline B -1)\widehat
  v_0 +\eps^3 \overline B \widehat \nu_2\),
\end{equation}
we write
\begin{equation*}
  \|I_1^\eps-\eps^3J_1\|_{L^2}\le \|G_1\|_{L^2} +
  \|S_1\|_{L^2}+\|S_2\|_{L^2}, 
\end{equation*}
where, simplifying the algebraic structure for the sake of presentation,
\begin{align*}
  G_1& = \F\(M^{-1}-1\)\F^{-1}\( \(|\F(M v)|^2 + \eps^2|\F(Mv_0)|^2\) \F
       M \F^{-1} \( \overline B \widehat \rho \)\),\\
  S_1& = \eps \F\(M^{-1}-1\)\F^{-1}\( \(|\F(M v)|^2 +
       \eps^2|\F(Mv_0)|^2\) \F 
       M \F^{-1} \( (\overline B-1) \widehat v_0 \)\),\\
  S_2 & =  \eps^3 \F\(M^{-1}-1\)\F^{-1}\( \(|\F(M v)|^2 +
       \eps^2|\F(Mv_0)|^2\) \F 
       M \F^{-1} \( \overline B \widehat \nu_2 \)\).
\end{align*}
We have left out the time variable in the expression
  of $M$ in order to lighten the notation, and do so below for the
  same reason.
The Gronwall term $G_1$ is estimated by merely using the boundedness
on $L^2$ of $ \F\(M^{-1}-1\)\F^{-1}$, and the unitarity of $\F M
\F^{-1}$, and we write
\begin{equation*}
  \|G_1\|_{L^2} \lesssim \(\|\F(Mv)\|_{L^\infty}^2 +\eps^2
  \|\F(Mv_0)\|_{L^\infty}^2  \) \|\widehat \rho\|_{L^2}\lesssim \eps^2
  \|\widehat \rho\|_{L^2}, 
\end{equation*}
by using \eqref{eq:control-hat-v}. For the source term $S_1$ and
$S_2$, we invoke the same arguments as in the proof of
Lemma~\ref{lem:R}, and the fact that $\F 
       M \F^{-1}$ is unitary on $H^s$ for any $s\ge 0$:
\begin{align*}
  \|S_1\|_{L^2}&\lesssim \frac{\eps}{\sqrt t} \left\| \(|\F(M v)|^2 +
       \eps^2|\F(Mv_0)|^2\) \F 
       M \F^{-1} \( (\overline B-1) \widehat v_0 \)\right\|_{H^1}\\
& \lesssim \frac{\eps}{\sqrt t}\(\|v\|_{\F (H^1)}^2 + \eps^2
  \|v_0\|_{\F (H^1)}^2 \) \|(\overline B-1)\widehat v_0\|_{H^1}\\
& \lesssim \frac{\eps^3}{\sqrt t}t^{2C\eps^2} \|(\overline B-1)\widehat
 v_0\|_{H^1}, 
\end{align*}
where we have used \eqref{eq:unif-u} for the last inequality. The last
term is controlled by
\begin{equation*}
  \|(\overline B-1)\widehat
 v_0\|_{H^1}\le \|\overline B-1\|_{L^\infty} \|v_0\|_\Sigma + \|\widehat
 v_0\d_x \overline B \|_{L^2}\lesssim \eps^2\log t +  \|\widehat
 v_0\d_x \overline B \|_{L^2}, 
\end{equation*}
in view of \eqref{eq:B-1}. Writing
\begin{equation*}
  \widehat
 v_0\d_x \overline B  = -2i\lambda\widehat v_0 \int_1^t
 \RE\(\overline{\widehat v}\d_x \widehat v\)\frac{d\tau}{\tau},
\end{equation*}
\eqref{eq:unif-u} now yields
\begin{equation*}
 \|\widehat
 v_0\d_x \overline B \|_{L^2}\lesssim \|v_0\|_{L^2}\int_1^t  \|
 \widehat v(\tau)\|_{L^\infty} \|\d_x \widehat
 v(\tau)\|_{L^2}\frac{d\tau}{\tau}\lesssim \eps^2 \int_1^t
 \frac{d\tau}{\tau^{1-C\eps^2}} \lesssim \eps^2 t^{C\eps^2}.
\end{equation*}
The estimate for $S_2$ is rather similar: 
\begin{align*}
  \|S_2\|_{L^2} & \lesssim \frac{\eps^3}{t^{1/2}}\left\| \(|\F(M v)|^2 +
       \eps^2|\F(Mv_0)|^2\) \F  
       M \F^{-1} \( \overline B \widehat \nu_2 \)\right\|_{H^1}\\
&\lesssim \frac{\eps^3}{t^{1/2}}\(\|\F(M v)\|_{H^1}^2 +
       \eps^2\|\F(Mv_0)\|_{H^1}^2\)\|\overline B \widehat \nu_2
 \|_{H^1}
    \lesssim \frac{\eps^5}{t^{1/2-2C\eps^2}}\|\overline B \widehat \nu_2
   \|_{H^1} . 
\end{align*}
The last term is controlled via Leibniz formula, invoking
Lemma~\ref{lem:nu2}, 
\begin{equation*}
\|\overline B \widehat \nu_2
   \|_{H^1}\le \|\widehat \nu_2   \|_{H^1}   + \|\widehat \nu_2\d_x
   \overline B\|_{L^2}\lesssim \log t + \eps^2 t^{C\eps^2},
\end{equation*}
and the source terms are estimated by
\begin{equation*}
  \|S_1\|_{L^2} + \|S_2\|_{L^2} \lesssim \eps^5\frac{\log
    t}{t^{1/2-3C\eps^2}}. 
\end{equation*}
We now turn to the $L^2$ estimate of $I_2^\eps-\eps^3 J_2$, and proceed
along the same spirit. 
\begin{equation*}
  I_2^\eps-\eps^3 J_2 = \left|\widehat{M(t)v}\right|^2
       \widehat{M(t)v} - |\widehat v|^2\widehat v-\eps^3
       \left|\widehat{M(t)v_0}\right|^2 
       \widehat{M(t)v_0} +\eps^3 |\widehat v_0 |^2\widehat v_0 .
\end{equation*}
We distinguish $\left|\widehat{M(t)v}\right|^2
   \widehat{M(t)v} -\eps^3 \left|\widehat{M(t)v_0}\right|^2 
 \widehat{M(t)v_0} $ and $|\widehat v|^2\widehat v-\eps^3
 |\widehat v_0 |^2\widehat v_0$.
 Discarding the precise algebraic structure like before,
 \begin{equation*}
     \left|\widehat{M(t)v}\right|^2
   \widehat{M(t)v} -\eps^3 \left|\widehat{M(t)v_0}\right|^2 
 \widehat{M(t)v_0} \approx \(    \left|\widehat{M(t)v}\right|^2+
   \eps^2  \left|\widehat{M(t)v_0}\right|^2 \) \F M(v-\eps v_0). 
  \end{equation*}
The last factor is again rewritten thanks to \eqref{eq:redress-DA},
and we estimate $I_2^\eps-\eps^3 J_2$ as
\begin{equation*}
  \|I_2^\eps-\eps^3 J_2\|_{L^2}\le \|G_2\|_{L^2}+\|S_3\|_{L^2} + \|S_4\|_{L^2},
\end{equation*}
where
\begin{align*}
  G_2 &= \(    \left|\widehat{M(t)v}\right|^2+
   \eps^2  \left|\widehat{M(t)v_0}\right|^2 \) \F M\F^{-1}\(\overline
        B \( \widehat w -\eps
        \widehat v_0-\eps^3\widehat \nu_2\) \)\\
  &\quad- \(    \left|\widehat{v}\right|^2+
   \eps^2  \left|\widehat{v_0}\right|^2 \) \overline
        B \( \widehat w -\eps
    \widehat v_0-\eps^3\widehat \nu_2\) ,\\
    S_3& = \eps \(    \left|\widehat{M(t)v}\right|^2+
   \eps^2  \left|\widehat{M(t)v_0}\right|^2 \) \F M\F^{-1}\(\(\overline
         B -1\) \widehat v_0\)\\
   &\quad -\eps \(    \left|\widehat{v}\right|^2+
   \eps^2  \left|\widehat{v_0}\right|^2 \) \(\overline
     B -1\) \widehat v_0,\\
    S_4& = \eps^3 \(    \left|\widehat{M(t)v}\right|^2+
   \eps^2  \left|\widehat{M(t)v_0}\right|^2 \) \F M\F^{-1}\widehat \nu_2 -\eps^3 \(    \left|\widehat{v}\right|^2+
   \eps^2  \left|\widehat{v_0}\right|^2 \)  \widehat \nu_2. 
\end{align*}
For the Gronwall term $G_2$, we proceed like for $G_1$, and write
\begin{equation*}
  \|G_2\|_{L^2}\lesssim \eps^2\| \widehat w -\eps
    \widehat v_0-\eps^3\widehat \nu_2\|_{L^2}=\eps^2\|\widehat \rho\|_{L^2}.
\end{equation*}
The source terms $S_3$ and $S_4$ are readily of size $\O(\eps^5)$; we
recover some decay in time by making the quantity $M-1$ appear
systematically. In the case of $S_3$, we write
\begin{align*}
   S_3& = \eps \(    \left|\widehat{M(t)v}\right|^2+
   \eps^2  \left|\widehat{M(t)v_0}\right|^2 \) \F M\F^{-1}\(\(\overline
         B -1\) \widehat v_0\)\\
   &\quad -\eps \(    \left|\widehat{v}\right|^2+
   \eps^2  \left|\widehat{v_0}\right|^2 \) \(\overline
     B -1\) \widehat v_0
   \pm \eps \(    \left|\widehat{M(t)v}\right|^2+
   \eps^2  \left|\widehat{M(t)v_0}\right|^2 \) \(\overline
     B -1\) \widehat v_0,
\end{align*}
and estimate as follows:
\begin{align*}
  \|S_3\|_{L^2} & \lesssim \eps^3 \| (M-1)\F^{-1}\(\(\overline
                  B -1\) \widehat v_0\)\|_{L^2} \\
  &\quad+ \|\(\overline
         B -1\) \widehat v_0\|_{L^\infty} \|(M-1)\widehat v\|_{L^2}\(
    \|\widehat{Mv}\|_{L^\infty} +  \|\widehat{v}\|_{L^\infty}\)\\
  & \lesssim \frac{\eps^3}{\sqrt t} \| x\F^{-1}\(\(\overline
    B -1\) \widehat v_0\)\|_{L^2}
    + \eps^5 \frac{\log t}{\sqrt t} \times\|x\widehat v\|_{L^2},
\end{align*}
where we have used \eqref{eq:control-hat-v} and \eqref{eq:B-1}. We
have already estimated the $H^1$-norm of $\(\overline
B -1\) \widehat v_0$,
\begin{equation*}
  \|\(\overline B -1\) \widehat v_0\|_{H^1}\lesssim \eps^2
 \(\log t + t^{C\eps^2}\),
\end{equation*}
and therefore
\begin{equation*}
  \|S_3\|_{L^2} \lesssim \eps^5\frac{\log t}{t^{1/2- C\eps^2}}.  
\end{equation*}
The term $S_4$ is controlled similarly, by using the same ideas as
above, and we come up with:
\begin{equation*}
  \frac{d}{dt}\|\widehat \rho\|_{L^2}\lesssim
  \frac{\eps^2}{t}\|\widehat \rho\|_{L^2} + \eps^5 \frac{\log t}{t^{3/2- 3C\eps^2}}.
\end{equation*}
Gronwall lemma then implies, provided that $\eps>0$ is
sufficiently small:
  \begin{proposition}\label{prop:approx-nu2}
    Suppose that $u^\eps$ solves \eqref{eq:NLS}, with $u^\eps_{\mid
      t=0}=u_0^\eps\in \Sigma$ such that 
    \begin{equation*}
      u_0^\eps = \eps v_0+\eps^3 w_2 + \O\(\eps^{5-\eta}\),
    \end{equation*}
    for $v_0,w_2\in \Sigma$, and some $0<\eta<2$. Then we have
    \begin{equation*}
  \sup_{t\ge 1} \|\widehat w (t)-\eps
    \widehat v_0-\eps^3\widehat \nu_2(t)\|_{L^2}\lesssim \eps^{5-\eta},
  \end{equation*}
  where $\nu_2$ is defined in Lemma~\ref{lem:nu2}. 
  \end{proposition}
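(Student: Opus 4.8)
The plan is to close a Gronwall estimate on the error $\widehat\rho := \widehat w - \eps\widehat v_0 - \eps^3\widehat\nu_2$, all the ingredients of which have already been assembled in the discussion preceding the statement. The first step is to pin down the value at the initial time $t=1$: from the finite-time Picard expansion $u^\eps_{\mid t=1} = \eps U(1)v_0 + \eps^3 \widetilde w_2 + \eps^{5-\eta}\widehat\rho^\eps$ with $\|\rho^\eps\|_\Sigma \lesssim 1$, together with $\widehat w(1) = \widehat v(1) = \F\bigl(U(-1)u^\eps(1)\bigr)$ and the identity $\widehat\nu_{2\mid t=1} = \F\bigl(U(-1)\widetilde w_2\bigr)$, and using that $\F$ and $U(-1)$ preserve $\Sigma$, one gets $\|\widehat\rho(1)\|_{L^2} \lesssim \eps^{5-\eta}$.

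The second step is the differential inequality itself. Starting from $\d_t\widehat\rho = \tfrac{\lambda}{t}B\bigl(I_1^\eps + I_2^\eps - \eps^3 J_1 - \eps^3 J_2\bigr) + \tfrac{\lambda}{t}(B-1)\eps^3(J_1+J_2)$, one splits the right-hand side, as above, into the genuine Gronwall contributions $G_1,G_2$ — which by the uniform bounds \eqref{eq:control-hat-v} are $\O\bigl(\eps^2\|\widehat\rho\|_{L^2}\bigr)$ — and the source terms $S_1,\dots,S_4$ together with the $(B-1)$ term \eqref{eq:source-nu2-1}. For the sources, the key is to factor out $\widehat v - \eps\widehat v_0$ via \eqref{eq:recoll-DA}--\eqref{eq:redress-DA}, so that the left-hand-side error $\widehat\rho$ reappears together with controlled remainders built from $\widehat v_0$ and $\widehat\nu_2$, and then to manufacture a factor $M-1$ systematically, trading half a power of $t$ against one $x$-weight; this is where the regularity of $\nu_2$ from Lemma~\ref{lem:nu2}, namely $\nu_2\in L^\infty([1,\infty[,\Sigma^\gamma)$ with $1/2<\gamma<1$, and the crude bounds \eqref{eq:unif-u} and \eqref{eq:B-1} on $B$, are used. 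One thereby reaches $\tfrac{d}{dt}\|\widehat\rho(t)\|_{L^2} \lesssim \tfrac{\eps^2}{t}\|\widehat\rho(t)\|_{L^2} + \eps^5\,\log t\; t^{-3/2+3C\eps^2}$, whose source is integrable in time for $\eps$ small.

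The final step is Gronwall's lemma with the integrating factor $t^{-C\eps^2}$: since $\int_1^\infty \log\tau\; \tau^{-3/2+4C\eps^2}\,d\tau < \infty$ for $\eps$ small, this gives $\|\widehat\rho(t)\|_{L^2} \lesssim (\eps^{5-\eta}+\eps^5)\,t^{C\eps^2}$, and the slowly growing factor $t^{C\eps^2}$ is absorbed exactly as in the error estimates of Sections~\ref{sec:wave-zero} and \ref{sec:complete}, yielding the stated bound $\O(\eps^{5-\eta})$. I expect the main obstacle to be not this Gronwall step but the bookkeeping that precedes it: one must simultaneously extract a source term of size $\eps^5$ — one power of $\eps$ better than the target, so that, consistently with $\widehat\rho$ being dominated by its own initial data, no further loss occurs — and enough decay in $t$ to make that source integrable, and these two demands are in tension, since recovering $t$-decay costs $x$-weights. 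This is precisely why the $\Sigma^\gamma$ analysis of $\nu_2$ had to be carried out first, and why $\widehat v - \eps\widehat v_0$ must be re-expressed through $B\widehat\rho$ plus controlled remainders at every occurrence rather than estimated crudely.
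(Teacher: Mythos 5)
Your proposal follows essentially the same route as the paper: the same error $\widehat\rho=\widehat w-\eps\widehat v_0-\eps^3\widehat\nu_2$, the same splitting of $\tfrac{\lambda}{t}B(I_1^\eps+I_2^\eps)-\tfrac{\lambda}{t}\eps^3(J_1+J_2)$ into Gronwall terms $G_1,G_2$ of size $\eps^2\|\widehat\rho\|_{L^2}/t$ and sources $S_1,\dots,S_4$ plus the $(B-1)(J_1+J_2)$ term, the same use of \eqref{eq:recoll-DA}--\eqref{eq:redress-DA} to reinsert $\widehat\rho$, of the $\Sigma^\gamma$ bounds of Lemma~\ref{lem:nu2} to gain time decay from $M-1$, and the same final differential inequality $\tfrac{d}{dt}\|\widehat\rho\|_{L^2}\lesssim\tfrac{\eps^2}{t}\|\widehat\rho\|_{L^2}+\eps^5\log t\,t^{-3/2+3C\eps^2}$. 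If anything you are slightly more explicit than the paper on the two points it leaves implicit, namely the initialization $\|\widehat\rho(1)\|_{L^2}\lesssim\eps^{5-\eta}$ via the choice $\widehat\nu_{2\mid t=1}=\F(U(-1)\widetilde w_2)$ and the handling of the residual $t^{C\eps^2}$ factor produced by the non-integrable Gronwall coefficient.
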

  In particular, letting $t$ go to infinity, we infer
  \begin{equation*}
    W =\eps \widehat v_0+\eps^3\widehat \nu_2^\infty +
    \O\(\eps^{5-\eta}\)\quad \text{in }L^2. 
  \end{equation*}
As $u_+^\eps = \F^{-1}\(W e^{-i\Phi}\)$, and we have seen at the end
of Section~\ref{sec:complete} that $\|\Phi\|_{L^\infty}=\O(\eps^4)$,
we infer
\begin{equation*}
  u_+^\eps  = \eps v_0 + \eps^3\nu_2 +  \O\(\eps^{5-\eta}\)\quad \text{in }L^2,
\end{equation*}
thus completing the proof of Theorem~\ref{theo:main}.

\subsection*{Acknowledgments.} The author wishes to thank L\'eo
Bigorgne for discussions on this topic, as well as Satoshi
Masaki and Pavel Naumkin for useful explanations on
Theorem~\ref{theo:complete}.  The author is grateful to the anonymous
referee for their careful reading, and for pointing out some flaw in
the initial submission.

\bibliographystyle{abbrv}
\bibliography{biblio}

\begin{thebibliography}{10}

\bibitem{Barab}
J.~E. Barab.
\newblock Nonexistence of asymptotically free solutions for nonlinear
  {S}chr\"odinger equation.
\newblock {\em J. Math. Phys.}, 25:3270--3273, 1984.

\bibitem{CaCMP}
R.~Carles.
\newblock Geometric optics and long range scattering for one-dimensional
  nonlinear {S}chr\"odinger equations.
\newblock {\em Comm. Math. Phys.}, 220(1):41--67, 2001.

\bibitem{CaGa09}
R.~Carles and I.~Gallagher.
\newblock Analyticity of the scattering operator for semilinear dispersive
  equations.
\newblock {\em Comm. Math. Phys.}, 286(3):1181--1209, 2009.

\bibitem{MR3724144}
T.~Cazenave and I.~Naumkin.
\newblock Modified scattering for the critical nonlinear {S}chr\"{o}dinger
  equation.
\newblock {\em J. Funct. Anal.}, 274(2):402--432, 2018.

\bibitem{ChenMurphy}
G.~Chen and J.~Murphy.
\newblock Recovery of the nonlinearity from the modified scattering map.
\newblock {\em Int. Math. Res. Not. IMRN}, 2024.
\newblock To appear. Archived at \url{https://arxiv.org/abs/2304.01455}.

\bibitem{DG}
J.~Derezi\'nski and C.~G\'erard.
\newblock {\em Scattering theory of quantum and classical {N}-particle
  systems}.
\newblock Texts and Monographs in Physics, Springer Verlag, Berlin Heidelberg,
  1997.

\bibitem{Ginibre}
J.~Ginibre.
\newblock An introduction to nonlinear {S}chr\"odinger equations.
\newblock In R.~Agemi, Y.~Giga, and T.~Ozawa, editors, {\em Nonlinear waves
  (Sapporo, 1995)}, GAKUTO International Series, Math. Sciences and Appl.,
  pages 85--133. Gakk\={o}tosho, Tokyo, 1997.

\bibitem{MR1855975}
J.~Ginibre and G.~Velo.
\newblock Long range scattering and modified wave operators for some {H}artree
  type equations. {III}. {G}evrey spaces and low dimensions.
\newblock {\em J. Differential Equations}, 175(2):415--501, 2001.

\bibitem{MR2342882}
J.~Ginibre and G.~Velo.
\newblock Long range scattering and modified wave operators for the
  {M}axwell-{S}chr\"{o}dinger system. {II}. {T}he general case.
\newblock {\em Ann. Henri Poincar\'{e}}, 8(5):917--994, 2007.

\bibitem{MR2474176}
J.~Ginibre and G.~Velo.
\newblock Long range scattering for the {M}axwell-{S}chr\"{o}dinger system with
  arbitrarily large asymptotic data.
\newblock {\em Hokkaido Math. J.}, 37(4):795--811, 2008.

\bibitem{MR2853555}
J.~Ginibre and G.~Velo.
\newblock Long range scattering for the wave-{S}chr\"{o}dinger system
  revisited.
\newblock {\em J. Differential Equations}, 252(2):1642--1667, 2012.

\bibitem{MR3192651}
J.~Ginibre and G.~Velo.
\newblock Modified wave operators without loss of regularity for some
  long-range {H}artree equations: {I}.
\newblock {\em Ann. Henri Poincar\'{e}}, 15(5):829--862, 2014.

\bibitem{MR3359525}
J.~Ginibre and G.~Velo.
\newblock Modified wave operators without loss of regularity for some long
  range {H}artree equations. {II}.
\newblock {\em Commun. Pure Appl. Anal.}, 14(4):1357--1376, 2015.

\bibitem{MR3144794}
Z.~Guo, N.~Hayashi, Y.~Lin, and P.~I. Naumkin.
\newblock Modified scattering operator for the derivative nonlinear
  {S}chr\"{o}dinger equation.
\newblock {\em SIAM J. Math. Anal.}, 45(6):3854--3871, 2013.

\bibitem{HN98}
N.~Hayashi and P.~Naumkin.
\newblock Asymptotics for large time of solutions to the nonlinear
  {S}chr\"odinger and {H}artree equations.
\newblock {\em Amer. J. Math.}, 120(2):369--389, 1998.

\bibitem{HN06}
N.~Hayashi and P.~Naumkin.
\newblock Domain and range of the modified wave operator for {S}chr\"odinger
  equations with a critical nonlinearity.
\newblock {\em Comm. Math. Phys.}, 267(2):477--492, 2006.

\bibitem{MR1618664}
N.~Hayashi and P.~I. Naumkin.
\newblock Asymptotic behavior in time of solutions to the derivative nonlinear
  {S}chr\"{o}dinger equation.
\newblock {\em Ann. Inst. H. Poincar\'{e} Phys. Th\'{e}or.}, 68(2):159--177,
  1998.

\bibitem{MR2047418}
N.~Hayashi, P.~I. Naumkin, A.~Shimomura, and S.~Tonegawa.
\newblock Modified wave operators for nonlinear {S}chr\"{o}dinger equations in
  one and two dimensions.
\newblock {\em Electron. J. Differential Equations}, pages No. 62, 16, 2004.

\bibitem{MR2864547}
N.~Hayashi, H.~Wang, and P.~I. Naumkin.
\newblock Modified wave operators for nonlinear {S}chr\"{o}dinger equations in
  lower order {S}obolev spaces.
\newblock {\em J. Hyperbolic Differ. Equ.}, 8(4):759--775, 2011.

\bibitem{KatoPusateri2011}
J.~Kato and F.~Pusateri.
\newblock A new proof of long-range scattering for critical nonlinear
  {Schr{\"o}dinger} equations.
\newblock {\em Differ. Integral Equ.}, 24(9-10):923--940, 2011.

\bibitem{MR4576319}
R.~Killip, J.~Murphy, and M.~Visan.
\newblock The scattering map determines the nonlinearity.
\newblock {\em Proc. Amer. Math. Soc.}, 151(6):2543--2557, 2023.

\bibitem{KiMuVi-p}
R.~Killip, J.~Murphy, and M.~Visan.
\newblock Determination of {Schr{\"o}dinger} nonlinearities from the scattering
  map.
\newblock Preprint, archived at \url{https://arxiv.org/abs/2402.03218}, 2024.

\bibitem{MR1913680}
N.~Kita and T.~Wada.
\newblock Sharp asymptotic behavior of solutions to nonlinear {S}chr\"{o}dinger
  equations in one space dimension.
\newblock {\em Funkcial. Ekvac.}, 45(1):53--69, 2002.

\bibitem{LindbladMurphy2006}
H.~Lindblad and A.~Soffer.
\newblock Scattering and small data completeness for the critical nonlinear
  {S}chr\"{o}dinger equation.
\newblock {\em Nonlinearity}, 19(2):345--353, 2006.

\bibitem{MaMi18}
S.~Masaki and H.~Miyazaki.
\newblock Long range scattering for nonlinear {S}chr\"{o}dinger equations with
  critical homogeneous nonlinearity.
\newblock {\em SIAM J. Math. Anal.}, 50(3):3251--3270, 2018.

\bibitem{MaMi19}
S.~Masaki, H.~Miyazaki, and K.~Uriya.
\newblock Long-range scattering for nonlinear {S}chr\"{o}dinger equations with
  critical homogeneous nonlinearity in three space dimensions.
\newblock {\em Trans. Amer. Math. Soc.}, 371(11):7925--7947, 2019.

\bibitem{MTT03}
K.~Moriyama, S.~Tonegawa, and Y.~Tsutsumi.
\newblock Wave operators for the nonlinear {S}chr\"{o}dinger equation with a
  nonlinearity of low degree in one or two space dimensions.
\newblock {\em Commun. Contemp. Math.}, 5(6):983--996, 2003.

\bibitem{Ozawa91}
T.~Ozawa.
\newblock Long range scattering for nonlinear {S}chr\"odinger equations in one
  space dimension.
\newblock {\em Comm. Math. Phys.}, 139:479--493, 1991.

\bibitem{Wada2000}
T.~Wada.
\newblock A remark on long-range scattering for the {H}artree type equation.
\newblock {\em Kyushu J. Math.}, 54(1):171--179, 2000.

\end{thebibliography}

\end{document}